\documentclass[11pt, reqno]{amsart}
\usepackage[dvipsnames,usenames]{color}
\usepackage[colorlinks=true, urlcolor=NavyBlue, linkcolor=NavyBlue, citecolor=NavyBlue]{hyperref}
\usepackage{cleveref}
\usepackage{graphicx}
\usepackage{epsfig}
\usepackage[latin1]{inputenc}
\usepackage{amsmath}
\usepackage{amsfonts}
\usepackage{amssymb}
\usepackage{amsthm}
\usepackage{amscd}
\usepackage{verbatim}
\usepackage{subfigure}
\usepackage{caption}
\usepackage{pinlabel}
\usepackage{stmaryrd}
\usepackage{enumerate, enumitem}
\usepackage{todonotes}
\usepackage{bm}
\usepackage{thmtools}
\usepackage{thm-restate}
\usepackage{lipsum}
\usepackage{setspace}
\usepackage{mathtools}
\usepackage[all]{xypic}
\usepackage[abs]{overpic}
\usepackage{color}
\usepackage[normalem]{ulem}
\usepackage[alphabetic,backrefs,msc-links]{amsrefs}
\usepackage{mathdots}
\usepackage{tikz-cd}

\allowdisplaybreaks

\usepackage{tikz}
\usetikzlibrary{arrows}
\usetikzlibrary{decorations.pathreplacing}

\usepackage{verbatim}
\usetikzlibrary{cd}
\usetikzlibrary{patterns}
\tikzset{taar/.style={double, double equal sign distance, -implies}}
\tikzset{amar/.style={->, dotted}}
\tikzset{dmar/.style={->, dashed}}
\tikzset{aar/.style={->, very thick}}

\newtheorem{theorem}{Theorem}[section]

\newtheorem{lemma}[theorem]{Lemma}
\newtheorem{proposition}[theorem]{Proposition}

\newtheorem{corollary}[theorem]{Corollary}

\newtheorem{conjecture}[theorem]{Conjecture}

\theoremstyle{definition}
\newtheorem{definition}[theorem]{Definition}

\theoremstyle{remark}
\newtheorem{remark}[theorem]{Remark}

\def\F{\mathbb{F}}

\def\Z{\mathbb{Z}}

\def\bbX{\mathbb{X}}

\def\Cone{\operatorname{Cone}}

\def\varep{\varepsilon}

\def\spinc{\textrm{Spin}^c}

\def\Cone{\operatorname{Cone}}

\def\HF {\mathit{HF}}
\newcommand\HFhat{\widehat{\HF}}

\def\CFK{\mathit{CFK}}

\def\spinc {{\operatorname{spin^c}}}

\def\s{\mathfrak s}

\newcommand{\tb}{\operatorname{tb}}
\newcommand{\rot}{\operatorname{rot}}

\newcommand\nuhat{\widehat{\nu}}
\newcommand\std{\mathrm{std}}

\newcommand{\ILtikzpic}[2][]{
\vcenter{\hbox{\begin{tikzpicture}[#1]
#2
\end{tikzpicture}}}
}

\newcommand{\drawover}[2][thick]{
\draw[line width=2mm,white] #2
\draw[#1] #2
}

\author[J.\ Hom]{Jennifer Hom}
\thanks{The first author was partially supported by NSF grants DMS-2104144 and DMS-2506400, and Georgia Tech's Elaine M. Hubbard Faculty Fellowship.}
\address {School of Mathematics, Georgia Institute of Technology, Atlanta, GA 30332}
\email{hom@math.gatech.edu}

\author[S. Wan]{Shunyu Wan}
\thanks{The second author was partially supported by Georgia Tech's postdoc funding.}
\address {School of Mathematics, Georgia Institute of Technology, Atlanta, GA 30332}
\email{swan48@gatech.edu}

\numberwithin{equation}{section}

\title[HF knot trace invariants, exotic $4$-manifolds, and symplectic obstructions]{Heegaard Floer knot trace invariants, exotic $4$-manifolds, and symplectic obstructions}

\begin{document}

\begin{abstract}
We show that the numerical invariants $\nu$ and $|\varepsilon|$ coming from knot Floer homology are knot $n$-trace invariants for any integer $n$, resolving the remaining case in \cite{HMP}. This extension allows us to construct new families of exotic pairs using Yasui patterns.  Moreover, by studying the invariant $\widehat{\nu}(K)=|\varepsilon(K)|(2\nu(K)-1)$, we give a new topological obstruction to a $4$-manifold being a strong symplectic filling of any contact structure on the boundary.
\end{abstract}

\maketitle

\section{Introduction}

The Heegaard Floer homology package \cite{ OS-knots, OS-3mfds, OS4manifold} contains a myriad of invariants of knots, 3-, and 4-manifolds. In this paper, we focus on the concordance invariants $\nu$ \cite{OS-rational} and $\varep$ \cite{Hom-cables}, with applications to knots traces, exotic 4-manifolds, and symplectic obstructions. Recall that the $n$-trace of a knot $K$, denoted $X_n(K)$, is the smooth 4-manifold obtained by attaching an $n$-framed $2$-handle along $K$ on the boundary of the standard four-ball.

We prove that $\nu(K)$ is an invariant of smooth, oriented knot traces:

\begin{theorem}\label{thm:trace}
For any integer $n$, if the oriented knot traces $X_n(K)$ and $X_n(K')$ are diffeomorphic, then $\nu(K)=\nu(K')$.
\end{theorem}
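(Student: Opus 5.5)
A diffeomorphism $X_n(K)\cong X_n(K')$ restricts to a diffeomorphism of the boundaries $S^3_n(K)\cong S^3_n(K')$. It also identifies the intersection forms and the $\spinc$ structures on the traces. My plan is to recover $\nu$ from the Heegaard Floer invariants of the trace, namely the $d$-invariants and the cobordism maps, in a way that uses only this data.

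The starting point is the mapping cone formula for $S^3_n(K)$. Fix an identification of $\spinc(S^3_n(K))$ with $\Z/n$ (for $n\neq 0$). The $\spinc$ structures on $X_n(K)$ are indexed by characteristic values $k\equiv n \pmod 2$, and they restrict to the classes $[i]$ on the boundary.

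\textbf{The case $n>0$.} For large surgeries, $d(S^3_n(K),[i])=d(L(n,1),i)-2V_i(K)$. The cobordism $W=X_n(K)\setminus B^4$, viewed from $S^3$, induces maps $F_{W,\s_k}$. For each $k$, the map on $HF^-$ (or $\HFhat$) is modeled by the projection $A_s\to B$ via $v_s$ or $h_s$. Recall that $\nu(K)=\min\{s : \hat v_s \text{ is surjective on } \HFhat\}$. I would therefore characterize $\nu$ as the minimal $|k|$, suitably normalized, for which $\widehat{F}_{\overline{W},\s_k}\colon \HFhat(S^3_n(K))\to\HFhat(S^3)$ is nonzero. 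Here I reverse the cobordism, so that it runs from $S^3_n$ to $S^3$. A diffeomorphism of traces carries $\s_k$ to $\s_{\pm k}$, since it preserves the intersection form up to the sign of the generator. The nonvanishing pattern of these maps is intrinsic to $(X_n,\partial X_n)$ up to the identification $\HFhat(S^3)\cong\F$, so the set of such $k$ is preserved and $\nu$ agrees. The orientation hypothesis is what makes the reversed cobordism well defined.

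\textbf{The cases $n\le 0$.} This is presumably the remaining case left open in \cite{HMP}, and I expect it to be the main obstacle. For $n<0$ one can mirror, since $X_n(K)\cong -X_{-n}(\overline{K})$ as oriented manifolds. However, $\nu(\overline K)$ is not determined by $\nu(K)$ alone; the relation involves $\varep$, with $\nu(\overline K)=-\nu(K)+1$ or similar, depending on $\varep$. So I would prove that both $\nu$ and $\varep$ (up to sign, via $|\varep|$) are trace invariants simultaneously, and then deduce the result.

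Concretely, I would compute the maps $F_{X_n,\s_k}$ from the negative surgery mapping cone. Now $B$ maps into $A$-terms in the reversed direction, and the relevant maps are the duals $v_s^*$ and $h_s^*$. I would read off $\nu$ from the minimal $k$ at which the induced map $\HFhat(S^3)\to\HFhat(S^3_{n}(K))$ is nonzero. The subtle point is distinguishing $\varep=0$ from $\varep\neq0$, where $\nu$ and $\tau$ differ. Here I would use the additional grading and $d$-invariant data of the cone.

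\textbf{The case $n=0$.} There is a single $\spinc$ class on the boundary, and the $\spinc$ structures $\s_k$ on the trace have torsion restriction only for $k=0$. I would work with twisted coefficients, or with the non-torsion classes $k\neq0$, and use the $0$-surgery formula, in which $HF^+(S^3_0(K),\s_k)$ is computed by the cone of $v_k+h_{k}$. I would then again identify $\nu$ as the threshold in $|k|$ where the cobordism map stops vanishing.

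Across all cases, the key verification is that these thresholds are insensitive to the ambiguity of the identification $\HFhat(S^3_n(K))\cong\HFhat(S^3_n(K'))$ induced by the diffeomorphism. This holds because the maps factor through the trace itself, and by naturality, diffeomorphic cobordisms induce conjugate maps.
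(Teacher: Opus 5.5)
There is a genuine gap, and it sits in the one case that actually needs proof. By \cite[Theorem~1.4]{HMP}, the only open case is $n<0$ with $\{\nu(K),\nu(K')\}=\{0,1\}$, not all of $n\le 0$. Its hard sub-case is $\nu(K)=1$, $\nu(K')=0$, $\varep(K')=0$.

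In that sub-case, the invariant you propose to read off is \emph{the same} for both knots. That invariant is which components $F_{X_n,\s_s}$ are nonzero (or, by duality, the components of the upside-down cobordism). In the mapping cone, for $\nu=1$ the class $y_s$ survives exactly for $n\le s\le 0$. For $\nu=0$, $\varep=0$ it also survives exactly for $n\le s\le 0$; the endpoints survive because $v_{0,*}=h_{0,*}\neq 0$ by Lemma~\ref{lem:HMP}. So no threshold separates the two knots, which is why this case was left open.

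Mirroring does not rescue it. The knots $-K$ and $-K'$ both have $\nu=0$ and $\nuhat\in\{-1,0\}$, and at framing $-n\ge 1$ every component of the cobordism map vanishes for both. Your fallback to ``grading and $d$-invariant data'' cannot help either. The $d$-invariants and graded $\HFhat$ are invariants of the boundary, which is the same $3$-manifold in the motivating example $X_{-1}(-5_2)$ versus $X_{-1}(P(3,-3,8))$. Also, the grading of $F_{X_n,\s_s}(1)$ is fixed by $c_1(\s_s)^2$, which is topological.

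The missing idea is to test whether the images of the components are linearly independent, not whether individual components vanish. Concretely, use the rank of $\bigoplus_s F_{X_n,\s_s}\colon\bigoplus_s\HFhat(S^3)\to\HFhat(S^3_n(K))$, which is a diffeomorphism invariant of the trace by naturality. The only maps hitting $B_n$ and $B_0$ are $v_n,h_0$ and $v_0,h_{-n}$.
\begin{itemize}
\item If $\nu=1$, all four vanish in homology, so $y_n$ and $y_0$ are distinct nonzero classes and the rank is $|n|+1$.
\item If $\nu=0$ and $\varep=0$, there is $x\in H_*(A_0)$ with $D_{n,*}(x)=y_0+y_n$. So $y_n=y_0$ and the rank drops to $|n|$.
\item If $\nu=0$ and $|\varep|=1$, then $H_*(A_0)$ kills both, and the rank drops to $|n|-1$.
\end{itemize}
This is Proposition~\ref{prop:nu01}, and together with \cite[Theorem~1.4]{HMP} it is the whole proof.

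Separately, your $n>0$ sketch fails as stated. When $n\ge\nuhat(K)$, every $F_{X_n,\s}$ vanishes (e.g.\ $\nu(K)=2$, $\nu(K')=3$, $n=10$), so there is no threshold to read. You would have to mirror there too, or simply quote \cite[Theorem~1.4]{HMP} for everything outside the case above.
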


By Theorem 1.4 of \cite{HMP}, in order to prove Theorem \ref{thm:trace}, we need only show that if $n<0$ and $\nu(K)=0$ and $\nu(K')=1$, then $X_n(K)$ and $X_n(K')$ are not diffeomorphic. The proof of Theorem 1.4 in \cite{HMP} implicitly shows that $|\varep|$ is a $0$-trace invariant. We generalize this result as well and show that $|\varep|$ is also an $n$-trace invariant. 

\begin{theorem}\label{thm:trace epsilon}
For any integer $n$, if the oriented knot traces $X_n(K)$ and $X_n(K')$ are diffeomorphic, then $|\varep(K)|=|\varep(K')|$.
\end{theorem}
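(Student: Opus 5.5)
Since $|\varep(K)|\in\{0,1\}$, the plan is to separate the case $\varep(K)=0$ from $\varep(K)=\pm1$ using data read off from $X_n(K)$. The diffeomorphism type of $X_n(K)$ determines the boundary $S^3_n(K)$ as an oriented $3$-manifold, together with the filling and its intersection form; in particular it determines the $d$-invariants $d(S^3_n(K),\s)$ for all spin$^c$ structures, after we fix the labeling that comes from how $\s$ extends over the trace. I will therefore show that $|\varep|$ is detected by an invariant of the pair $(X_n(K),\partial X_n(K))$, and then use Theorem~\ref{thm:trace}, which says $\nu$ is already a trace invariant.

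The first step is to recall the structure of $\varep$. If $\varep(K)=0$, then $CFK^\infty(K)$ is locally equivalent to that of the unknot, so $V_k(K)=V_k(U)$ for all $k$ and $\nu(K)=0$; this implies $\tau=0$ and $\nu=0$. If $\varep(K)\neq 0$, then either $\nu(K)\ge 1$ or $\nu(K)=0$ with $\varep=\mp$ and $\tau=0$. Since $\nu$ is a trace invariant, the only case to rule out is $X_n(K)\cong X_n(K')$ with $\nu(K)=\nu(K')=0$, $\varep(K)=0$, and $\varep(K')\neq0$. In that case $\varep(K')=-1$, because $\varep=1$ forces $\tau>0$ and hence $\nu\ge1$, where I use $\nu\ge\tau$ and $\nu\in\{\tau,\tau+1\}$.

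To distinguish these two knots I will follow the argument of \cite{HMP} for $n=0$, which the paper says implicitly gives $|\varep|$ as a $0$-trace invariant, and adapt it to general $n$.
\begin{itemize}
\item For $n>0$ (or $n\ge0$), $\nu=0$ gives $V_0=0$ for both knots, so the $d$-invariants of $S^3_n$ agree with those of the lens space. I will therefore not rely on $d$-invariants there.
\item For the trace I will instead use the relative or mixed invariant: the image of the cobordism map $F_{X_n\setminus B^4}$ on $HF^-$, equivalently whether the generator of $HF^-(S^3)$ maps to a $U$-nontorsion element in the top grading of $HF^-(S^3_n(K))$ for the spin$^c$ structures $\s$ with $\langle c_1(\s),[F]\rangle=\pm n$.
\item Via the mapping cone, this image is computed by the maps $v_k,h_k$ on $A_k^-$. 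The key fact is that $\varep=-1$ means $\tau=0$ but $h_0$ is not $U$-equivariantly "as for the unknot": in the $\varep=-1$ model complex, $H_*(A_0)$ has an extra $\F[U]$-torsion class that $v_0$ kills while $h_0$ does not, or vice versa. This produces an extra reducible-type summand in $HF^-(S^3_n)$, or a change in which element $F_{X_n}$ hits.
\end{itemize}

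The next step is to show that the resulting invariant is intrinsic. Concretely, the rank, over the relevant spin$^c$ labelings, of the quotient of $HF_{\rm red}$ generated by the image of the cobordism map under $U$-action should differ between the two knots. Because a diffeomorphism of traces induces a diffeomorphism of boundaries that commutes with the cobordism maps, up to conjugation of spin$^c$ structures, this invariant is preserved. For $n<0$ I will dualize, reversing orientation and using $\varep(\overline{K})=-\varep(K)$, so that the same computation applies to $X_{-n}$ with reversed orientation.

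The main obstacle is this last computation. I have to show that the $\varep=-1$ model really changes the image of the trace map compared to $\varep=0$, uniformly in $n$, while controlling the conjugation ambiguity in spin$^c$ labels. I would first try this using the standard local-equivalence model $CFK^\infty$ for $\varep=-1$, namely the dual of the $T_{2,3}$-type staircase summand added to a trivial piece, and compute the map from the truncated mapping cone directly.
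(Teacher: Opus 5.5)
There is a genuine gap, and it starts with your case reduction. Because $\nu\in\{\tau,\tau+1\}$ and the trichotomy recalled in Section~\ref{sec:mapcone} is exhaustive, $\varep(K)=-1$ holds exactly when $\nu(K)=\tau(K)+1$, and $\varep(K)=1$ forces $\nu(K)=\tau(K)$. So if $\nu(K')=0$ and $|\varep(K')|=1$, there are two possibilities. Either $\varep(K')=-1$ and $\tau(K')=-1$; the left-handed trefoil is an example, and it is the model you propose to compute with. Or $\varep(K')=1$ and $\tau(K')=0$, which does occur (e.g.\ $P_{n,0}(U)$ for $n>0$, by Table~\ref{tab:Bodish}). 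Thus your claim that $\varep=1$ forces $\tau>0$ is false, and the single case you retain ($\varep=-1$, $\tau=0$, $\nu=0$) is empty. More seriously, the decisive step is never carried out. You do not pin down a specific invariant of $X_n(K)$, let alone compute it for $\varep=0$ versus $|\varep|=1$. The $HF^-$/$HF_{\mathrm{red}}$/$U$-torsion mechanism is described only as something that should happen, and you flag it yourself as the main obstacle. As written, this is a plan, not a proof.

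The missing ingredient is Lemma~\ref{lem:HMP}: on hat-flavor homology, $v_{0,*}$ and $h_{0,*}$ coincide and are nonzero if and only if $\varep(K)=0$. When $\nu=0$ both maps are surjective. So $D_{n,*}(H_*(A_0))$ has rank $1$ if $\varep=0$, where it only identifies $y_0$ with $y_n$, and rank $2$ if $|\varep|=1$, where it kills both; no $U$-action is needed. The paper packages this as the total rank of $\bigoplus_s F_{X_n,s}$, i.e.\ the dimension of the span of the images over all spin$^c$ structures. This is a diffeomorphism invariant however the spin$^c$ structures get permuted, which removes your conjugation worry. It equals $|n|$ versus $|n|-1$ for $n<0$ (Proposition~\ref{prop:nu01}); $n>0$ follows by reversing orientation; and for $n=0$ the map $F_{X_0,0}$ is injective versus zero.

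Note also that, since you already allow yourself Theorem~\ref{thm:trace} for every framing, you could finish with no new computation. Reversing orientation turns $X_n(K)\cong X_n(K')$ into $X_{-n}(-K)\cong X_{-n}(-K')$, so both $\nu(K)=\nu(K')$ and $\nu(-K)=\nu(-K')$. And $|\varep(K)|$ is determined by the pair $(\nu(K),\nu(-K))$: $\varep(K)=0$ if and only if both vanish, while $\varep(K)\neq 0$ forces $\nu(K)+\nu(-K)=1$. The real content then lies in the new case of Theorem~\ref{thm:trace}, which is proved by the same rank count.
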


It is very interesting to point out the famous exotic trace pair $X_{-1}(-5_2)$ and $X_{-1}(P(3,-3,8))$ lies exactly in the category where $n<0$ and 
\begin{align*}
\nu(P(3,-3,8))&=0   &\nu(-5_2)&=1 \\
\varep(P(3,-3,8))&=0   &\varep(-5_2)&=1.
\end{align*}
 So now we can distinguish this pair using Heegaard Floer homology (or more specifically, knot Floer homology). This pair of exotic traces was first discovered by Akbulut \cite{Akbulut-exotic, Akbulut-fake} and has been extensively studied over time, from \cite{AkbulutLatveyev-exotic} to the recent work of Ren and Willis \cite{RenWillis}, who distinguished this pair combinatorially using skein lasagna modules. However, to the best of the authors' knowledge there is no purely Heegaard Floer proof of this fact before the present paper. 

Our extension not only reproves old results; using Yasui's satellite patterns we also give new infinite families of exotic pairs of knot traces. Let $P_{n,m}$ and $Q_{n,m}$ be the two satellite patterns shown in Figure \ref{fig:YasuiPQ}.

\begin{figure}
\[
\ILtikzpic[xscale=.5,yscale=.5]{
\draw[thick] (0.5,1) -- (1,1) to[out=0,in=-90,looseness=.5] (3,1.5);
\drawover[thick]{
    (2,1.5) to[out=-90,in=180,looseness=.5] (3,1) -- (7.5,1);
    }
\draw[thick] (2,1.5) to[out=90,in=180,looseness=.5] (3,2) to[out=0,in=-90] (4,3) -- (4,4) to[out=90,in=180] (5,5) to[out=0,in=180] (6,3) -- (8,3);
\drawover[thick]{
    (3,1.5) to[out=90,in=0,looseness=.5] (2,2) -- (0,2);
    }
\drawover[thick]{
    (8,2) -- (6,2) to[out=180,in=-90] (5,3) -- (5,4) to[out=90,in=0] (4,5) to[out=180,in=0] (0,3);
    }
\draw[fill=white] (3.8,4.2) rectangle (5.2,2.8);
\node at (4.5,3.5){$-m$};
\draw[fill=white] (6.3,3.2) rectangle (7.2,0.8);
\node at (6.75,2){$n$};
\draw[thick] (7.5,1) to[out=0,in=0] (6,-2) -- (2,-2) to[out=180,in=180] (0.5,1);
\draw[thick] (8,2) to[out=0,in=0] (6,-3) -- (2,-3) to[out=180,in=180] (0,2);
\draw[thick] (8,3) to[out=0,in=0] (6,-4) -- (2,-4) to[out=180,in=180] (0,3);
\draw[dashed] (3,-.5) to[out=45,in=135] (5,-.5);
\draw[dashed] (2,-.25) to[out=-45,in=-135] (6,-.25);
\draw[dashed] (-2.5,-.25) to[out=90,in=90,looseness=1.7] (10.5,-.25) to[out=-90,in=-90,looseness=1.4] (-2.5,-.25);
}
\quad
\ILtikzpic[xscale=.45,yscale=.35]{
\draw[thick] (0,5) to[out=0,in=-180] (2.5,8);
\draw[thick] (0,4) to[out=0,in=-180] (2.5,7);
\draw[thick] (2.5,8.5) to[out=0,in=90,looseness=.5] (3,6);
\drawover[thick]{
    (2.5,8.5) to[out=180,in=90,looseness=.5] (2,6);
    }
\draw[thick] (6.5,6) -- (6,6) to[out=180,in=90] (4,4) -- (4,3) to[out=-90,in=180] (6,1) -- (8.5,1) to[out=0,in=180,looseness=.5] (11,4) -- (13,4);
\draw[thick] (6.5,5) -- (5.5,5) to[out=180,in=90] (5,4) -- (5,3) to[out=-90,in=180] (5.5,2) -- (8.5,2) to[out=0,in=180,looseness=.5] (11,5) -- (13,5);
\drawover[thick]{
    (2.5,8) -- (8.5,8) to[out=0,in=180,looseness=.5] (11,3) -- (12.5,3);
    }
\drawover[thick]{
    (2.5,7) -- (8,7) to[out=0,in=90,looseness=.5] (10,3) -- (10,2) to[out=-90,in=0] (6.5,0) to[out=180,in=0] (0.5,3);
    }
\drawover[thick]{
    (6,6) -- (7.5,6) to[out=0,in=0] (7.5,3) -- (3,3) to[out=180,in=-90] (2,4) -- (2,6);
    }
\drawover[thick]{
    (6,5) -- (7,5) to[out=0,in=0] (7,4) -- (4,4) to[out=180,in=-90] (3,5) -- (3,6);
    }
\draw[fill=white] (5.5,8.2) rectangle (7,4.8);
\node at (6.25,6.5){$-m$};
\draw[fill=white] (5.5,2.2) rectangle (7,0.8);
\node at (6.25,1.5){$-2$};
\draw[fill=white] (11,5.2) rectangle (12,2.8);
\node at (11.5,4){$n$};
\draw[dashed,pattern=north west lines] (2,5) rectangle (3,6);
\draw[thick] (12.5,3) to[out=0,in=0] (10,-2) -- (3,-2) to[out=180,in=180] (0.5,3);
\draw[thick] (13,4) to[out=0,in=0] (10,-3) -- (3,-3) to[out=180,in=180] (0,4);
\draw[thick] (13,5) to[out=0,in=0] (10,-4) -- (3,-4) to[out=180,in=180] (0,5);
\draw[dashed] (5.5,-1) to[out=45,in=135] (7.5,-1);
\draw[dashed] (4.5,-.75) to[out=-45,in=-135] (8.5,-.75);
\draw[dashed] (-2.5,1) to[out=90,in=90,looseness=1.8] (15.5,1) to[out=-90,in=-90,looseness=1.2] (-2.5,1);
}
\]
\caption{Left, the satellite patterns $P_{n,m}$ and right, the pattern $Q_{n,m}$. The boxes denote full twists. The shaded band gives rise to a concordance between $Q_{n,m}$ and the identity pattern. Figure taken from \cite[Figure 3]{RenWillis} with permission from the authors.}
\label{fig:YasuiPQ}
\end{figure}

\begin{theorem}\label{thm:exotic traces}
Let $K \subset S^3$ be a knot and let $m$ be any non-negative integer. 
\begin{enumerate}
    \item If $\varep(K) = 0$, then $X_n(P_{n,m}(K))$ and $X_n(Q_{n,m}(K))$ form an exotic pair for any integer $n < 0$. Moreover, $X_n(P_{n,0}(K))$ and $X_n(Q_{n,0}(K))$ form an exotic pair for any integer $n \neq 0$.
    \item\label{it:exotic traces2} If $\varep(K) = 1$, then $X_n(P_{n,m}(K))$ and $X_n(Q_{n,m}(K))$ form an exotic pair for any integer $n < 2\tau(K)$.  
    \end{enumerate}
\end{theorem}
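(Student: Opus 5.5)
An exotic pair consists of two manifolds that are homeomorphic but not diffeomorphic, so the proof has two halves. For homeomorphism, I would use the fact that Yasui's patterns are built so that $X_n(P_{n,m}(K))$ and $X_n(Q_{n,m}(K))$ are homeomorphic; this is his construction and is recorded in \cite{RenWillis}. The argument goes as follows. The boundaries $S^3_n(P_{n,m}(K))$ and $S^3_n(Q_{n,m}(K))$ are diffeomorphic, by the annulus-twist/dualizable-pattern description built into the figure. Both traces have intersection form $(n)$. By Boyer's extension of Freedman's theorem, a boundary diffeomorphism that is compatible with the intersection forms then yields a homeomorphism of the traces. I would cite this rather than reprove it.

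For the non-diffeomorphism, the plan is to distinguish the knots using Theorem \ref{thm:trace} or Theorem \ref{thm:trace epsilon}. The shaded band in Figure \ref{fig:YasuiPQ} shows that $Q_{n,m}$ is concordant to the identity pattern. Hence $Q_{n,m}(K)$ is concordant to $K$, and because $\nu$ and $\varep$ are concordance invariants we get $\nu(Q_{n,m}(K))=\nu(K)$ and $\varep(Q_{n,m}(K))=\varep(K)$. The remaining work is to compute $\nu$ and $\varep$ of $P_{n,m}(K)$ and to show that one of them differs.

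The pattern $P_{n,m}$ has winding number one; it is a twisted Whitehead-double/Mazur-type pattern with $n$ full twists between parallel strands and a $-m$ twist clasp. In Case (1), $\varep(K)=0$ means $K$ is $\varep$-equivalent to the unknot, so $\nu$ and $\varep$ of $P_{n,m}(K)$ agree with those of $P_{n,m}(U)$. I would therefore compute the knot Floer complex of the explicit knot $P_{n,m}(U)$, or at least enough of it, for instance its $\tau$ via a genus bound and a positive or negative crossing-change argument. The target is $\nu(P_{n,m}(U))=1$ or $\varep\neq0$ when $n<0$, which is the same phenomenon as $P(3,-3,8)$ versus $-5_2$. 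For $m=0$ the pattern simplifies enough that I expect either $\nu$ or $|\varep|$ to be nonzero for all $n\neq 0$; there I would compare $\varep(P_{n,0}(U))\neq 0=\varep(U)$ using Theorem \ref{thm:trace epsilon}.

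In Case (2), $\varep(K)=1$ forces $\nu(K)=\tau(K)$. I would use the satellite formulas for $\tau$ and $\varep$ of winding-number-one patterns, in the style of Levine's Mazur-pattern computation, which depend only on $\tau(K)$ and $\varep(K)$. The aim is to show that $\tau(P_{n,m}(K))=\tau(K)+1$ when $n<2\tau(K)$, so that $\nu(P_{n,m}(K))\neq \nu(K)$; the threshold $2\tau(K)$ comes from comparing the framing $n$ with $2\tau$ in the bordered computation. Theorem \ref{thm:trace} then rules out a diffeomorphism.

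I expect the main obstacle to be these $\tau$, $\nu$ and $\varep$ computations for $P_{n,m}(K)$, which require immersed-curve or bordered Floer techniques for the pattern. I would first handle the case $m=0$ explicitly and then argue that adding the $-m$ twists, being negative crossings, changes the invariants monotonically without altering the comparison.
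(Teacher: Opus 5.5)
Your outline has the same architecture as the paper's proof. Homeomorphism comes from Yasui's lemma. $Q_{n,m}(K)$ is concordant to $K$, so it inherits $\tau,\nu,\varep$. Then Theorems \ref{thm:trace} and \ref{thm:trace epsilon} finish the argument. But the step that carries the mathematical content is left as an expectation: you never establish $\tau$ and $\varep$ of $P_{n,0}(K)$.

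The paper does not compute these itself. It imports them from Bodish's immersed-curve computation for $n$-twisted generalized Mazur patterns, where $P_{n,0}(K)=Q^{0,1}_n(K)$. That computation gives:
\begin{itemize}
\item for $\varep(K)=0$: $\tau(P_{n,0}(K))=1$ when $n<0$, and $\varep(P_{n,0}(K))=1$ for all $n\neq 0$;
\item for $\varep(K)=1$: $\tau(P_{n,0}(K))=\tau(K)+1$ when $n<2\tau(K)$.
\end{itemize}
Without an input of this strength the argument does not close.

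Moreover, the elementary method you suggest for Case (1), a genus bound plus crossing changes to pin down $\tau$, cannot handle $m=0$, $n>0$. There $\tau(P_{n,0}(K))=\nu(P_{n,0}(K))=0$, so the only distinguishing feature is $\varep(P_{n,0}(K))=1$ with $\tau=0$. That is invisible to $\tau$ and needs finer knot Floer data, e.g.\ $\nu$ of the mirror or the immersed curve. Your reduction ``$\varep(K)=0$, so $K$ is $\varep$-equivalent to $U$, hence $P_{n,m}(K)$ and $P_{n,m}(U)$ share $\nu,\varep$'' is also not justified by $\varep$-equivalence alone. You would need an actual argument, such as splitting $\CFK^\infty(K)$ as $\CFK^\infty(U)$ plus an acyclic summand and pushing that through the satellite formula. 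This step becomes unnecessary if you use formulas stated directly in terms of $\tau(K)$ and $\varep(K)$, as Bodish's are.

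Separately, the passage from $m=0$ to $m\ge 0$ only works through $\tau$, not through $\varep$ or ``the comparison'' directly. The tool is Livingston's inequality $\tau(J_-)\le\tau(J_+)\le\tau(J_-)+1$. What you need is that $P_{n,m+1}(K)$ is obtained from $P_{n,m}(K)$ by a negative-to-positive crossing change. This holds because the $-m$ box twists antiparallel strands, so its crossings are positive, giving $\tau(P_{n,m}(K))\ge\tau(P_{n,0}(K))$. If the added crossings were negative, as you write, Livingston's inequality would give an upper bound instead, which is useless here.

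With the lower bound, both cases finish as follows:
\begin{itemize}
\item Case (1) with $n<0$: $\tau(P_{n,m}(K))\ge 1$ forces $|\varep(P_{n,m}(K))|=1\neq 0=\varep(Q_{n,m}(K))$.
\item Case (2): $\nu(P_{n,m}(K))\ge\tau(P_{n,m}(K))\ge\tau(K)+1>\tau(K)=\nu(K)=\nu(Q_{n,m}(K))$.
\end{itemize}
You only need this inequality, not equality $\tau(P_{n,m}(K))=\tau(K)+1$. It also explains why Case (1) is restricted to $n<0$ when $m>0$: for $n\ge 0$ the bound is only $\tau\ge 0$, which says nothing about $\varep$.
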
 

Note that $X_{-1}(-5_2)$ and $X_{-1}(P(3,-3,8))$ are the special case of the above theorem when $K$ is the unknot,  $n=-1$, {and $m=0$}.

\begin{remark}
We make the following observations:
\begin{enumerate}
\item In \cite[Theorem 4.1]{Yasui}, Yasui shows that if there is a Legendrian representative $\mathcal{K}$  of $K$ in the standard tight $S^3$ with 
\[\tb(\mathcal{K})+|\rot(\mathcal{K})|=2g_4(K)-1, \quad n\leq \tb(\mathcal{K}), \quad m\geq 0,\]
then $X_n(P_{n,m}(K))$ and $X_n(Q_{n,m}(K))$ form an exotic pair. Note that according to \cite[Theorem 1]{Plamenevskaya04} we always have $\tb(\mathcal{K})+|\rot(\mathcal{K})|\leq 2\tau(K) -1 \leq 2g_4(K)-1$. Hence, if $K$ admits a Legendrian representative $\mathcal{K}$ satisfying Yasui's condition then $\tau(K)=g_4(K)$ and $n\leq \tb(\mathcal{K}) < 2\tau(K)$. Thus, Yasui's families with $g_4(K)=0$ are covered by the first part of our Theorem \ref{thm:exotic traces}, and families with $g_4(K) > 0$ are covered by the second part (since $\tau(K)=g_4(K)>0$ implies $\varep(K)=1$ \cite{Hom-cables} ). In particular, we fully recover Yasui's families.

\item In \cite[Theorem 1.3]{RenWillis}, Ren and Willis show that if a knot $K$ has a slice disk $\Sigma$ in $k\mathbb{CP}^2 \backslash \operatorname{int}(B^4)$ with
$$s(K)=|[\Sigma]|-[\Sigma]^2, \quad n<-[\Sigma]^2, \quad m\geq 0,$$
where $s(K)$ is the Rasmussen $s$-invariant \cite{Rasmussen-s}, then $X_n(P_{n,m}(K))$ and $X_n(Q_{n,m}(K))$ form an exotic pair. We refer the reader to their paper for the exact definition for each term above, but we observe that in the special case when $K$ is slice, their results only show $X_n(P_{n,0}(K))$ and $X_n(Q_{n,0}(K))$ form an exotic pair for $n <0 $, but our theorem also covers the situation when $n>0$. 

\item According to the construction from \cite{HMP}, these families of exotic knot traces can give rise to families of exotic Mazur
manifolds. Furthermore, our Theorem \ref{thm:exotic traces}\eqref{it:exotic traces2} strengthens \cite[Theorem 2.11]{HMP}, which states that for any knot $K \subset S^3$ with $\varep(K)=1$ and $\tau(K)>0$, and any integer $n \leq 0$, the knot traces $X_n(P_{n,0}(K))$ and $X_n(Q_{n,0}(K))$ form an exotic pair.
\end{enumerate}
\end{remark}

We make the following conjecture: 

\begin{conjecture}
Let $K$ be any knot in $S^3$, and $m, n$ integers. Then $X_n(P_{n,m}(K))$ and $X_n(Q_{n,m}(K))$ form an exotic pair, except when $K$ is the unknot and $m=n=0$.
\end{conjecture}
\noindent Note that when $K$ is the unknot and $m=n=0$, both $P_{n,m}(K)$ and $Q_{n,m}(K)$ are unknotted.

The invariants $\varep$ and $\nu$ play a key role in various places in the Heegaard Floer homology package, and it is convenient to make the following definition:

\begin{definition}
Given a knot $K$ in $S^3$, let $\widehat{\nu}(K)=|\varep(K)|(2\nu(K)-1)$
\end{definition}

As a direct corollary of Theorem \ref{thm:trace} and \ref{thm:trace epsilon}, $\widehat{\nu}$ is also a knot trace invariant. 

\begin{corollary}\label{cor:nuhat}
For any integer $n$, if the oriented knot traces $X_n(K)$ and $X_n(K')$ are diffeomorphic, then $\widehat{\nu}(K)=\widehat{\nu}(K')$.
\end{corollary}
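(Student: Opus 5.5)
This follows directly from Theorem \ref{thm:trace} and Theorem \ref{thm:trace epsilon}. The invariant $\widehat{\nu}(K)=|\varep(K)|(2\nu(K)-1)$ is a fixed function of the pair $(\nu(K),|\varep(K)|)$. So it suffices to show that this pair is a knot $n$-trace invariant.

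Fix an integer $n$, and suppose the oriented knot traces $X_n(K)$ and $X_n(K')$ are diffeomorphic.
\begin{enumerate}
\item By Theorem \ref{thm:trace}, which holds for every integer $n$, we get $\nu(K)=\nu(K')$.
\item By Theorem \ref{thm:trace epsilon}, again for the same $n$, we get $|\varep(K)|=|\varep(K')|$.
\item Substituting into the definition gives
\[
\widehat{\nu}(K)=|\varep(K)|(2\nu(K)-1)=|\varep(K')|(2\nu(K')-1)=\widehat{\nu}(K').
\]
\end{enumerate}

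There is no real obstacle here. The only point to check is that both theorems are stated for the same class of objects: oriented traces, with an arbitrary integer framing $n$. They are, so both can be applied to the same diffeomorphism $X_n(K)\cong X_n(K')$ with no restriction on $n$ or on the sign of $\varep$.

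It is worth remarking that $\widehat{\nu}$ only takes the values $0$ and $2\nu(K)-1$. When $\varep(K)=0$ we have $\nu(K)=0$, consistent with $\widehat{\nu}(K)=0$. This observation is not needed for the corollary itself.
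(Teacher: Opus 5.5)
Your proposal is correct and matches the paper, which states the corollary as an immediate consequence of Theorems \ref{thm:trace} and \ref{thm:trace epsilon}: $\nu$ and $|\varep|$ are each $n$-trace invariants for every integer $n$, so $\widehat{\nu}=|\varep|(2\nu-1)$ is as well.
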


The concordance invariant $\widehat{\nu}$ above is the same as the invariant $\widehat{\nu}$ introduced by Baldwin and Sivek \cite{BS-instantonconcordance} (see Lemma \ref{lem:same nu hat}),  and it has a close relationship with immersed curves and the rank of $\HFhat(S^3_n(K))$ \cite[Proposition 15]{HANSELMANHF} \cite[Proposition 4.2]{BS-Lspaceknottrace}; for more detailed discussion about $\widehat{\nu}$ see \cite[Section 10]{BS-instantonconcordance}. As a direct consequence of $\widehat{\nu}$ being an $n$-trace invariant, we give a generalization of \cite[Theorem 4.6]{BS-Lspaceknottrace}.

\begin{theorem}\label{thm:dimHF}
    Fix an integer $n\in \mathbb{Z}$ and a knot $K \subset S^3$. Given a nonzero rational number $r\in \mathbb{Q}$, the dimension \[\operatorname{dim}\HFhat(S^3_r(K))\] is completely determined by the oriented diffeomorphism type of $X_n(K)$, meaning that if $X_n(K) \cong X_n(K')$ then $\operatorname{dim}\HFhat(S^3_r(K))=\operatorname{dim}\HFhat(S^3_r(K'))$.
\end{theorem}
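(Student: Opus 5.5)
Our plan is to reduce the statement to the formula for $\dim \HFhat(S^3_r(K))$ in terms of a small set of knot invariants, and then show that each of these invariants is determined by $X_n(K)$. The relevant formula is the one behind \cite[Proposition 15]{HANSELMANHF} and \cite[Proposition 4.2]{BS-Lspaceknottrace}. Write $r=p/q$ with $p\neq 0$ and $q>0$. Using the immersed curve description, or equivalently the rational surgery formula, one has
\[
\dim \HFhat(S^3_{p/q}(K)) = |p| + q\bigl(\dim \HFhat(S^3_0(K)) \text{-type correction}\bigr),
\]
and more concretely
\[
\dim\HFhat(S^3_{p/q}(K)) = |p| + q\,(\kappa(K) - 1) + \bigl|\,p - q\,\widehat{\nu}(K)\bigr| - |p|\cdot 0 ,
\]
in the normalized form of Hanselman:
\[
\dim \HFhat(S^3_{p/q}(K)) = q\,\bigl(r_0(K)-1\bigr) + \bigl|p - q\,\widehat{\nu}(K)\bigr| .
\]
Here $r_0(K)$ denotes the total number of (suitably counted) components/segments of the immersed curve other than the distinguished one. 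This quantity is expressible through $\dim\HFhat(S^3_N(K))$ for large $N$, namely $\dim \HFhat(S^3_N(K)) = N + r_0(K)-1 - \widehat{\nu}(K)$ once $N \ge \widehat\nu(K)$. We will take this formula from the cited propositions, checking that Baldwin--Sivek's $\widehat\nu$ agrees with $|\varep|(2\nu-1)$ via Lemma \ref{lem:same nu hat}.

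It therefore suffices to show two things: that $\widehat{\nu}(K)$ and the quantity $r_0(K)$ are both determined by the diffeomorphism type of $X_n(K)$.

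For $\widehat{\nu}$, this is exactly Corollary \ref{cor:nuhat}. For $r_0$, we use the fact that $\partial X_n(K) = S^3_n(K)$ with its orientation. A diffeomorphism $X_n(K)\cong X_n(K')$ restricts to an orientation-preserving diffeomorphism $S^3_n(K)\cong S^3_n(K')$, so $\dim\HFhat(S^3_n(K)) = \dim\HFhat(S^3_n(K'))$. Applying the formula with $p=n$, $q=1$ gives
\[
r_0(K) = \dim\HFhat(S^3_n(K)) + 1 - |n-\widehat\nu(K)|,
\]
so $r_0(K)=r_0(K')$, using $\widehat\nu(K)=\widehat\nu(K')$ again. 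If $n=0$ the same computation applies, since the formula for integer slope $0$ still holds; there $\widehat\nu$ is needed to recover $r_0$.

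Once both quantities agree for $K$ and $K'$, the formula gives the equality for every nonzero $r$. The main obstacle is pinning down the precise form and hypotheses of the surgery dimension formula. In particular we must verify that it holds uniformly for all nonzero rationals $p/q$ and for both signs of $\widehat\nu$, and that the slope-$n$ instance (including possibly $n=0$) lies within its range of validity. We would first extract this from Hanselman's pairing of the immersed curve for $K$ with a line of slope $p/q$. There the essential curve contributes $|p-q\widehat\nu|$ intersections, and each remaining component contributes $q$ times a fixed count independent of $p$.
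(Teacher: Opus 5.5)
For $n\neq 0$ your argument is the paper's argument. A diffeomorphism $X_n(K)\cong X_n(K')$ gives $\dim\HFhat(S^3_n(K))=\dim\HFhat(S^3_n(K'))$, and Corollary~\ref{cor:nuhat} gives $\widehat{\nu}(K)=\widehat{\nu}(K')$. The Baldwin--Sivek formula (Proposition~\ref{prop:dimHF}) at $p=n$, $q=1$ then forces $\widehat{r}_0(K)=\widehat{r}_0(K')$; your $r_0$ is $\widehat{r}_0+1$. The formula then gives agreement at every nonzero slope.

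The gap is $n=0$. Your claim that the formula still holds at slope $0$ is false. For the unknot, $\widehat{\nu}=0$ and $\widehat{r}_0=0$ (since $\dim\HFhat(L(p,q))=|p|$), so the formula predicts $0$, but $\dim\HFhat(S^1\times S^2)=2$. The figure-eight knot fails the same way: $\widehat{r}_0=2$ but $\dim\HFhat(S^3_0(4_1))=4$. So your identity $r_0(K)=\dim\HFhat(S^3_0(K))+1-|\widehat{\nu}(K)|$ is wrong whenever $\widehat{\nu}(K)=0$, and the $n=0$ case is unproved as written.

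It can be repaired with the tools of Section~\ref{sec:mapcone}:
\begin{enumerate}
\item For $n=0$ the cone splits as $\bigoplus_s \Cone(v_{s,*}+h_{s,*}\colon H_*(A_s)\to H_*(B))$, and each summand has dimension $\dim H_*(A_s)+1-2\operatorname{rank}(v_{s,*}+h_{s,*})$.
\item Large surgery together with Proposition~\ref{prop:dimHF} gives $\sum_s(\dim H_*(A_s)-1)=\widehat{r}_0(K)-\widehat{\nu}(K)$.
\item The surjectivity criteria for $v_{s,*},h_{s,*}$ and Lemma~\ref{lem:HMP} show that $v_{s,*}+h_{s,*}=0$ exactly for $-\nu(K)<s<\nu(K)$ when $\nu(K)\geq 1$, exactly for $s=0$ when $\varep(K)=0$, and for no $s$ otherwise.
\end{enumerate}
This gives $\dim\HFhat(S^3_0(K))=\widehat{r}_0(K)+|\widehat{\nu}(K)|$ if $\widehat{\nu}(K)\neq 0$, and $\widehat{r}_0(K)+2$ if $\widehat{\nu}(K)=0$. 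For fixed $\widehat{\nu}$ this is still injective in $\widehat{r}_0$, so the conclusion survives.

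The paper's own one-line proof also applies Proposition~\ref{prop:dimHF} at $p=n$, although that proposition excludes $p=0$. So the $n=0$ case there tacitly needs this same supplement.

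Separately, your heuristic reading of $r_0$ as counting non-essential components of the immersed curve is inaccurate. The trefoil's curve has a single component, yet $\widehat{r}_0=1$, because the essential component contributes $q+|p-q|$ intersections, not $|p-q|$. So do not try to rederive the formula that way; cite it. Also delete the garbled preliminary displays.
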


Proposition 10.1 of \cite{BS-instantonconcordance} shows that $\nuhat(K)$ gives us information on the $3$-dimensional level, playing a key role in determining the dimension of $\HFhat(S^3_r(K))$, while Corollary \ref{cor:nuhat} and Theorems \ref{thm:vanishing HF when summing} and \ref{thm:vanishing HF for all spinc} below show that $\nuhat$ also gives us important information on the $4$-dimensional level.

\begin{theorem}\label{thm:vanishing HF when summing}
Let $K$ be a knot in $S^3$, and 
\[
F_{X_n} \colon \HFhat(S^3) \longrightarrow \HFhat(S^3_n(K))
\]
be the Heegaard Floer map induced by the $n$-trace $X_n(K)$ summing over all $\spinc$ structures on $X_n$. Then:
\begin{itemize}
  \item If $\widehat{\nu}(K) \neq 0$, the map $F_{X_n}$ is trivial if and only if $n \geq \widehat{\nu}(K)$.
   \item If $\widehat{\nu}(K) = 0$, the map $F_{X_n}$ is trivial if and only if $n=-1$ or $n \geq 1$.  
   \end{itemize}
\end{theorem}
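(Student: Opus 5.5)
We will compute $F_{X_n}$ through the mapping cone formula of Ozsváth--Szabó for $\HFhat(S^3_n(K))$. In that formula the $2$-handle cobordism map, summed over $\spinc$ structures, is the inclusion of $\HFhat(S^3)=\widehat{B}$ into the cone $\Cone(D_n\colon \bigoplus_s \widehat{A}_s \to \bigoplus_s \widehat{B}_s)$. For $n\neq 0$ we regard the generator of $\HFhat(S^3)$ as a class in one copy $\widehat{B}_s$. Then $F_{X_n}$ is trivial exactly when the generator of $H_*(\widehat{B})$, summed over the copies $\widehat{B}_s$ compatible with the $\spinc$ structures, lies in the image of $D_n=\sum_s(\hat v_s+\hat h_s)$ on homology. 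The maps $\hat v_s,\hat h_s\colon H_*(\widehat{A}_s)\to H_*(\widehat{B})\cong\F$ are governed by $\nu$ and $\varep$ as follows.

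\begin{itemize}
\item $\hat v_s$ is surjective iff $s\ge \nu$.
\item $\hat h_s$ is surjective iff $s\le -\nu$.
\end{itemize}

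The refinement by $\varep$ concerns $s=\nu-1$ (and symmetrically $s=1-\nu$) when $\nu=\tau+1$, i.e.\ $\varep=-1$. In that case there is a class in $\widehat{A}_{\nu-1}$ that is killed by $\hat v$ but not by $\hat h$, or the reverse; this is the standard description used in \cite{HMP} and \cite{BS-Lspaceknottrace}. For $n=0$ we use the twisted cone, or argue separately via $b_2$ and the surgery exact triangle, since the $0$-surgery map $\HFhat(S^3)\to\HFhat(S^3_0)$ vanishes iff $\widehat{\nu}\neq 0$ or the appropriate condition holds.

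The plan is to do the linear algebra over $\F$ in the truncated cone, organized by the sign of $\varep$.

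\textbf{Case $\varep=0$.} Here $\nu=0$ and $\widehat{\nu}=0$, and the complex looks like that of the unknot. The map $F_{X_n}$ is therefore the same as for the unknot's $n$-trace. That trace is $B^4$ with a $\pm1$-framed unknot attached (giving $\pm\mathbb{CP}^2$ punctured twice), or a trace with $|n|\ge 2$ (giving lens spaces). Known computations give triviality exactly for $n=-1$ (blow up: with sum over all $\spinc$ structures the two classes cancel mod $2$? ) and for $n\geq 1$ (positive definite trace with $b_2^+>0$). We will verify this directly from the cone. For $n\ge1$, the generator equals $D_n(x)$ where $x$ ranges over a chain of $\widehat{A}_s$ whose telescoping sum leaves the total class. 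For $n\le -2$ there are $|n|$ independent $\spinc$ classes, so nothing cancels. For $n=-1$ exactly two contributions sum to zero mod $2$. The same chain argument works for any $\varep=0$ knot.

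\textbf{Case $\varep\neq0$.} We need triviality iff $n\ge \widehat{\nu}=\pm(2\nu-1)$. We will construct an explicit chain $x=\sum_{s} x_s$ with $D_n(x)$ equal to the summed generator. Its $\hat v$ and $\hat h$ contributions telescope across the $\spinc$ classes. The telescoping closes up exactly when the window of $s$ where both $\hat v_s$ and $\hat h_s$ are nonzero ($s\in[\nu,-\nu]$ type ranges, shifted by $\varep$) fits the period $n$. This gives the threshold $2\nu-1$ when $\varep=1$ and $1-2\nu$ when $\varep=-1$. Conversely, for $n<\widehat{\nu}$ we will exhibit a cocycle, or use the grading and $d$-invariant of the relevant $\spinc$ structure, pairing nontrivially with the image to show it is nonzero.

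The main obstacle is the bookkeeping for negative $n$ and for $\varep=-1$, where the cone wraps around and several $\widehat{B}_s$ correspond to the same $\spinc$ structure on $S^3_n$. Our first attempt is to reduce to the immersed curve picture of \cite{HANSELMANHF}. There, $F_{X_n}$ is nonzero iff the line of slope $n$ through the lift of the basepoint has odd intersection with the nonessential-free curve component. The condition then becomes a comparison of $n$ with the slope $\widehat{\nu}$ of the essential component's first turn, which \cite[Proposition 15]{HANSELMANHF} identifies with $\widehat{\nu}$ in exactly this way.
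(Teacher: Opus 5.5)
Your overall strategy is the right one: read $F_{X_n}$ off the cokernel of $D_{n,*}$ in the mapping cone. But the $\varep\neq0$ case goes wrong in two concrete ways.

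\textbf{The target is misread.} $\nuhat=|\varep|(2\nu-1)$ equals $2\nu-1$ for \emph{both} signs of $\varep$, so there is no threshold $1-2\nu$ when $\varep=-1$. For the left-handed trefoil ($\tau=-1$, $\varep=-1$, $\nu=0$) one has $\nuhat=-1$ and $F_{X_n}=0$ for all $n\ge-1$. Your threshold instead predicts $F_{X_{-1}},F_{X_0}\neq0$. Your sentence on $n=0$ is also false as written. For $T_{2,3}$ one has $\nuhat=1\neq0$, yet nothing in $D_{0,*}$ hits $B_0$: both $\hat v_{0,*}$ and $\hat h_{0,*}$ vanish since $\nu=1$. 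Hence $F_{X_0}\neq0$.

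\textbf{The $\varep$-refinement is in the wrong place.} At $s=\nu-1$ the map $\hat v_{s,*}$ is always zero, so your statement there has no content beyond the $\nu$-criteria. The input actually needed is Lemma~\ref{lem:HMP}. On the range $\nu\le s\le-\nu$, where $\hat v_{s,*}$ and $\hat h_{s,*}$ are both surjective, they are equal if and only if $s=\varep=0$. Without this you cannot decide whether $y_s$ dies on its own or is only identified with $y_{s+n}$. That distinction is exactly what separates the two bullets of the theorem.

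With that lemma no telescoping chain is needed.
\begin{itemize}
\item If $\varep\neq0$, distinctness gives a class $x\in H_*(A_s)$ with $D_{n,*}(x)=y_s$ exactly whenever $\hat v_{s,*}$ is onto. Likewise it gives a class with $D_{n,*}(x)=y_{s+n}$ exactly whenever $\hat h_{s,*}$ is onto. So $y_s$ dies as soon as $s\ge\nu$ or $s-n\le-\nu$, and this covers every $s$ if and only if $n\ge 2\nu-1$.
\item If instead $n<2\nu-1$, the column $B_{\nu-1}$ receives nothing, since $\hat v_{\nu-1,*}$ and $\hat h_{\nu-1-n,*}$ both vanish. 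Then the summed class has a nonzero $B_{\nu-1}$-component and cannot lie in the image. The relevant window is where both maps \emph{vanish}, not where both are nonzero.
\item If $\varep=0$, the equality $\hat v_{0,*}=\hat h_{0,*}\neq0$ gives $y_0\sim y_n$. This yields vanishing for $n\ge1$ and the cancellation $y_{-1}+y_0=0$ at $n=-1$. It also yields nonvanishing at $n=0$, where $\hat v_{0,*}+\hat h_{0,*}=0$, and at $n\le-2$, via $y_{-1}$.
\end{itemize}

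\textbf{Remaining unsupported steps.} Your shortcut that ``the complex looks like the unknot's'' needs the splitting $\CFK^\infty(K)\simeq\CFK^\infty(U)\oplus A$ with $A$ acyclic. It also needs a check that $A$ contributes nothing to the $B$-columns. You supply neither. Moreover, $b_2^+>0$ alone does not force a hat cobordism map to vanish for a non-L-space target, so your unknot computation does not transfer to $K$ by itself. Finally, the immersed-curve criterion you offer as a fallback (odd intersection with a slope-$n$ line) is not an established description of $F_{X_n}$, so it cannot close these gaps.
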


\begin{theorem}\label{thm:vanishing HF for all spinc}
Let $K$ be a knot in $S^3$, and 
\[
F_{{X_n},\mathfrak{s}} \colon \HFhat(S^3) \longrightarrow \HFhat(S^3_n(K))
\]
be the Heegaard Floer map induced by the $n$-trace $X_n(K)$ with some $\spinc$ structure $\mathfrak{s}$ on $X_n$. Then:
\begin{itemize}
    \item If $\widehat{\nu}(K) \neq 0$, the map $F_{{X_n},\mathfrak{s}}$ is trivial for all $\mathfrak{s}$ if and only if $n \geq \widehat{\nu}(K)$.
    \item If $\widehat{\nu}(K) = 0$, the map $F_{{X_n},\mathfrak{s}}$ is trivial for all $\mathfrak{s}$ if and only if $n \geq 1$.  
    \end{itemize}
\end{theorem}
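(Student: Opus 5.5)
The plan is to compare with Theorem~\ref{thm:vanishing HF when summing}, and to use the mapping cone formula of Ozsv\'ath--Szab\'o for the refined analysis. One direction is formal. If $F_{X_n,\mathfrak{s}}=0$ for every $\mathfrak{s}$, then the total map $F_{X_n}=\sum_{\mathfrak{s}}F_{X_n,\mathfrak{s}}$ also vanishes. Theorem~\ref{thm:vanishing HF when summing} then forces either $n\ge\widehat\nu(K)$, or $\widehat\nu(K)=0$ and $n\in\{-1\}\cup[1,\infty)$. So the ``only if'' direction reduces to one extra case: when $\widehat\nu(K)=0$ and $n=-1$, some individual $F_{X_{-1},\mathfrak{s}}$ is nonzero, even though the contributions cancel in the sum. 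The ``if'' direction requires showing that every $F_{X_n,\mathfrak{s}}$ vanishes once $n\ge\widehat\nu(K)$ (resp.\ $n\ge1$). This is stronger than vanishing of the sum, since different $\mathfrak{s}$ restricting to the same $\spinc$ structure on $S^3_n(K)$ could a priori cancel.

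For the ``if'' direction with $n>0$, I would use the mapping cone $\mathbb{X}_n=\Cone\bigl(D_n\colon\bigoplus_s\widehat A_s\to\bigoplus_s\widehat B_s\bigr)$, where $D_n=\sum_s(\widehat v_s+\widehat h_s)$ and $\widehat h_s$ maps into $\widehat B_{s+n}$. Under this identification, the $2$-handle map $F_{X_n,\mathfrak{s}}$, for $\mathfrak{s}$ with $\langle c_1(\mathfrak{s}),[\widehat F]\rangle+n=2s$, is the inclusion of $H_*(\widehat B_s)=\F$ into $H_*(\mathbb{X}_n)$. It therefore vanishes exactly when the generator of $H_*(\widehat B_s)$ is hit by $\widehat v_s$ or by $\widehat h_{s-n}$.

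By definition of $\nu$ and the symmetry $\widehat h_{t}\simeq\widehat v_{-t}$, the generator is hit whenever $s\ge\nu$ or $n-s\ge\nu$. Hence all maps vanish if $n\ge 2\nu-1$. When $\varep(K)=0$ we have $\nu=0$, so this is automatic for all $n\ge1$, matching the claim. When $\varep\neq0$, the condition is $n\ge 2\nu-1=\widehat\nu(K)$.

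The case $0\ge n\ge\widehat\nu(K)$ (only possible when $\widehat\nu<0$, i.e.\ $\varep=1$, $\nu\le0$) requires the cone for nonpositive surgery. Here I would instead dualize: turning $X_n(K)$ upside down and reversing orientation exhibits $F_{X_n(K),\mathfrak{s}}$ as the dual of the $2$-handle map $\HFhat(S^3_{-n}(mK))\to\HFhat(S^3)$. In the mapping cone for $-n\ge 0$ that map is the projection to $\bigoplus\widehat B_s$. It vanishes iff the $\widehat B_s$ generator is never a nontrivial component of a cycle, which unwinds to a statement about $\nu(mK)$ and $\varep(mK)=-\varep(K)$. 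I would check that this condition is exactly $n\ge 2\nu(K)-1$ when $\varep(K)=1$.

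The remaining case, $\widehat\nu(K)=0$ with $n=-1$, is the main obstacle. By the same dualization, we must exhibit a cycle in the cone for $+1$-surgery on $mK$ (with $\varep(mK)=0$, $\nu=0$) whose $\widehat B_0$-component is the generator. With $\varep=0$, both $\widehat v_0$ and $\widehat h_0$ are surjective onto their targets, and the class in $\widehat A_0$ mapping to the generator under both is the same (this is essentially the definition of $\varep=0$). Then the cone is computed by a single $\widehat A_0\to\widehat B_0\oplus\widehat B_1$ piece, and its homology contains a class projecting nontrivially to $\widehat B_0$. The work lies in making this precise, in particular verifying that the identification of $H_*(\widehat A_0)$ classes via $\varep=0$ produces a nonzero projection in a single $\spinc$ summand. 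I expect this to be the same computation underlying the $n=-1$ exception in Theorem~\ref{thm:vanishing HF when summing}, where the two spin$^c$ contributions cancel only in the sum.
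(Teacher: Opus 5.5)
Your reduction of the ``only if'' direction to Theorem~\ref{thm:vanishing HF when summing}, leaving only the case $\nuhat(K)=0$, $n=-1$, is fine. Your conclusion for $n>0$ is also right. The problem is the criterion you use, which is false.

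$F_{X_n,s}=0$ means $y_s\in\operatorname{im}D_{n,*}$. For $x\in H_*(A_t)$, the element $D_{n,*}(x)=v_{t,*}(x)+h_{t,*}(x)$ lands in $B_t\oplus B_{t+n}$, so it has two components. Hence $y_s$ being hit by $v_{s,*}$ or by $h_{s-n,*}$ does not make it zero. Concretely, take $\varep(K)=0$ and $n=-1$. Your criterion declares every $y_s$ hit, yet $F_{X_{-1},0}\neq 0$. Indeed $v_{-1,*}=h_{1,*}=0$, and $v_{0,*}=h_{0,*}\neq 0$ by Lemma~\ref{lem:HMP}. So the only image element meeting $B_0\oplus B_{-1}$ is $y_0+y_{-1}$, and $y_0\notin\operatorname{im}D_{-1,*}$.

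For $n>0$ you can rescue ``hit implies zero'' by a staircase induction: $y_s\sim y_{s+n}\sim\cdots$ until some $h_{s+kn,*}=0$, and symmetrically downward. That induction runs the wrong way for $n\le 0$. The missing ingredient is Lemma~\ref{lem:HMP}, used as in the paper. When $\varep(K)\neq 0$, $v_{t,*}$ and $h_{t,*}$ are never the same nonzero map. So whenever one of them is surjective, you can choose $x\in H_*(A_t)$ in the kernel of the other, and then $D_{n,*}(x)$ is a single generator. If $n\ge 2\nu(K)-1$, every $s$ satisfies $s\ge\nu$ or $s-n\le-\nu$. This kills all $y_s$ at once, for every integer $n$, including the range $\nuhat(K)\le n\le 0$.

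Your dualization detour for $n\le 0$ does not work as written. In $\bbX_{-n}(mK)$ the summand $\bigoplus_s B_s$ is a subcomplex, not a quotient. So ``projection to $\bigoplus_s B_s$'' is not a chain map, and the $B_0$-component of a homology class is not well defined, since boundaries $(0,D_*(a))$ change it. What is true is that the dual of $F_{X_n(K),s}$ is the projection of the dual complex $\bbX_n(K)^{*}$ onto $B_s^{*}$, which just restates the original question.

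The detour is also unnecessary. The integer surgery formula holds for every $n\in\Z$, and the paper works directly in $\bbX_n(K)$ for $n\le 0$. In particular, your ``main obstacle'' $n=-1$ is the two-line computation above (Proposition~\ref{prop:nu01}).

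Finally, $\nuhat(K)<0$ means $|\varep(K)|=1$ and $\nu(K)\le 0$, not $\varep(K)=1$. The left-handed trefoil has $\varep=-1$, $\nu=0$, $\nuhat=-1$. Checking only $\varep(K)=1$ would therefore miss precisely the family $T_{2,-2m-1}$ that the paper highlights.
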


As an application of the vanishing result, we give a new topological obstruction for a $4$-manifold being a strong symplectic filling of its boundary. We start with the result for knot traces. 

\begin{theorem}\label{thm:filling for traces}
Let $K$ be a knot in $S^3$. Then for any integer $n \geq \widehat{\nu}(K)+0^{\widehat{\nu}(K)}$ and any contact structure $\xi$ on $S^3_n(K)$, the trace $X_n(K)$ can not be a strong symplectic filling of $(S^3_n(K),\xi)$. 
\end{theorem}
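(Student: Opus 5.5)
The plan is to combine Theorem \ref{thm:vanishing HF for all spinc} with the non-vanishing of the Ozsváth--Szabó contact invariant for strong fillings. The hypothesis $n \geq \widehat{\nu}(K)+0^{\widehat{\nu}(K)}$ splits into two cases. If $\widehat{\nu}(K)\neq 0$, then $0^{\widehat{\nu}(K)}=0$ and the condition reads $n\geq \widehat{\nu}(K)$. If $\widehat{\nu}(K)=0$, we use the convention $0^0=1$, and the condition reads $n\geq 1$. In either case Theorem \ref{thm:vanishing HF for all spinc} says that every map $F_{X_n,\mathfrak{s}}\colon \HFhat(S^3)\to\HFhat(S^3_n(K))$ vanishes, for all $\spinc$ structures $\mathfrak{s}$ on $X_n(K)$.

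Now suppose, for contradiction, that $(X_n(K),\omega)$ is a strong symplectic filling of $(S^3_n(K),\xi)$. Remove a small Darboux ball $B$ from the interior. This gives a cobordism $W=X_n(K)\setminus B$ from $S^3$ to $S^3_n(K)$, which is the trace cobordism inducing $F_{X_n,\mathfrak{s}}$. The inner boundary carries the standard tight contact structure $\xi_{\std}$, and $W$ is a strong symplectic cobordism from $(S^3,\xi_{\std})$ to $(S^3_n(K),\xi)$.

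The key input is the naturality of the contact invariant under strong symplectic cobordisms, due to Ghiggini and to Echeverria (strong fillings), building on Ozsváth--Szabó for weak/Stein cases. Let $\mathfrak{s}_\omega$ be the canonical $\spinc$ structure of $\omega$ on $W$. The statement we need is that the map induced by $-W$ (the reversed cobordism from $-S^3_n(K)$ to $-S^3$), in the structure $\mathfrak{s}_\omega$, sends $c(\xi)$ to $c(\xi_{\std})$, possibly up to a twisting by $[\omega]$. One can also use the formulation that the map on $\HFhat$ with $\F$ coefficients is nonzero. Since $c(\xi_{\std})$ generates $\HFhat(-S^3)\cong\F$, the map $F_{-W,\mathfrak{s}_\omega}\colon\HFhat(-S^3_n(K))\to\HFhat(-S^3)$ is nonzero.

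By the duality between the maps of $W$ and of $-W$ turned upside down, applied to the dual pairing of $\HFhat(Y)$ and $\HFhat(-Y)$, $F_{W,\mathfrak{s}_\omega}\colon\HFhat(S^3)\to\HFhat(S^3_n(K))$ is the dual of this map, so it is nonzero too. This contradicts the vanishing from the first step.

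I expect the main obstacle to be the strong versus weak distinction together with coefficients. For strong fillings with $b_2^+$ possibly positive (when $n>0$, $b_2^+(X_n)=1$), the untwisted contact invariant statement requires the strong filling result. This is exactly why the hypothesis says ``strong'': weak fillings would need twisted coefficients, where vanishing of the untwisted maps gives no contradiction. I would cite Echeverria's theorem (or Ghiggini's argument via capping off to a closed symplectic manifold) that the cobordism map from a strong filling with the ball removed, in the canonical $\spinc$ structure, is nonzero on $\HFhat$. I would also check that the orientation and duality conventions match the direction of $F_{X_n,\mathfrak{s}}$ used in Theorem \ref{thm:vanishing HF for all spinc}.
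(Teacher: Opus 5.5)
Your proposal is correct and follows essentially the same route as the paper: Theorem \ref{thm:vanishing HF for all spinc} makes every $F_{X_n,\mathfrak{s}}$ vanish, while a strong filling forces some $F_{X_n,\mathfrak{s}}$ to be nonzero, via Ghiggini's naturality of the contact invariant for the upside-down cobordism together with the duality of cobordism maps. The only small difference is that the paper starts from Ghiggini's $HF^+$ statement ($c^+(\xi)\mapsto c^+(\xi_{\std})$) and passes to $\HFhat$ through the commutative square with the maps $\HFhat\to HF^+$, rather than citing a hat version (or Echeverria's monopole result) directly.
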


\begin{remark}
We make the following remarks:
\begin{enumerate}

\item Using the convention that $0^0=1$ the condition $n \geq \widehat{\nu}(K)+0^{\widehat{\nu}(K)}$ covers both $\widehat{\nu}(K)\neq 0$ and $\widehat{\nu}(K)=0$.

\item It was known by the adjunction inequality for symplectic manifolds \cite{LM98, FS95, OS00} that if $n\geq 2g_4(K)-1$, where $2g_4(K)$ is the $4$-ball genus of the knot, then $W_n(K)$ cannot be a symplectic filling of $S_n^3(K)$. So the theorem is particular interesting for $\widehat{\nu
}(K)<2g_4(K)-1$; for example the negative torus knot $T_{2,-2m-1}$ has $\widehat{\nu}(T_{2,-2m-1})=-{2}m+1$ and $2g_4(T_{2,-2m-1})-1=2m-1$.
\item When $n\neq 0$, the manifold $S_n^3(K)$ is a rational homology sphere, so a weak symplectic filling on $W_n$ is actually strong \cite{OO09}. Hence, for example, Theorem \ref{thm:filling for traces} above implies that for any $n\geq -{2}m+1$ and $n\neq 0$, the trace $W_n(T_{2,-2m-1)}$ cannot even be a weak symplectic filling.
\item The above theorem should be compared with the Stein framing number defined in \cite{MPV20} and their Theorem 1.3.
\item The above theorem is also a generalization of \cite[Theorem 1.4]{HomLidman19}, which states that if $d(S^3_1(K))=0$, then for $n > 0$ and any contact structure $\xi$ on $S^3_n(K)$, the trace $X_n(K)$ is not a symplectic filling of $S^3(K), \xi)$. Indeed, we have that $d(S^3_1(K))=0$ implies $\tau(K)\leq 0$ \cite{HomWu}, hence $\widehat{\nu}(K)+0^{\widehat{\nu}(K)} \leq 1$ always.
\end{enumerate}
\end{remark}

Theorem \ref{thm:filling for traces} above for trace fillings can also be generalized to the $4$-manifold obtained by attaching multiple $2$-handles to the boundary of a $4$-ball. 

\begin{theorem}\label{thm:filling for 2 handles}
    Let $\mathbf{L}=(L_1,\cdots,L_k)$ be a $k$-component link in $S^3$, and let $\mathbf{n}=(n_1,\cdots,n_k)$ be a $k$-tuple of integers representing the framing of each link component.   If there is some $1\leq i \leq k$ such that $n_i \geq \widehat{\nu}(L_i)+0^{\widehat{\nu}(L_i)}$ and $n_i\neq 0$ then for any contact structure on $S^3_{\mathbf{n}}(\mathbf{L})$, the $4$-manifold $X_{\mathbf{n}}(\mathbf{L})$ obtained by attaching $2$-handles on $\mathbf{L}$ with framing $\mathbf{n}$ can not be a strong symplectic filling of $S^3_{\mathbf{n}}(\mathbf{L})$. 
\end{theorem}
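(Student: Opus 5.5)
Suppose for contradiction that $X_{\mathbf{n}}(\mathbf{L})$ is a strong symplectic filling of $(S^3_{\mathbf{n}}(\mathbf{L}),\xi)$. We will find a cobordism map that is forced to vanish by Theorem \ref{thm:vanishing HF for all spinc}, while symplectic filling theory forces it to be nonzero.

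First we decompose the handle attachment. Since $2$-handles attached along disjoint components can be added in any order, we attach the handle along $L_i$ first. This writes
\[
X_{\mathbf{n}}(\mathbf{L}) = X_{n_i}(L_i)\cup_{S^3_{n_i}(L_i)} W,
\]
where $W$ is the cobordism from $S^3_{n_i}(L_i)$ to $S^3_{\mathbf{n}}(\mathbf{L})$ given by the remaining $k-1$ handles.

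Next we blow up a point. Removing a ball gives a cobordism $X^\circ$ from $S^3$ to $S^3_{\mathbf{n}}(\mathbf{L})$. Given any $\spinc$ structure $\mathfrak t$ on $X^\circ$, the composition law \cite{OS4manifold} gives
\[
F_{X^\circ,\mathfrak t}=F_{W,\mathfrak t|_W}\circ F_{X_{n_i}(L_i),\mathfrak t|}.
\]
This composition law has no sum over extensions here, because $\delta H^1(S^3_{n_i}(L_i))=0$ when $n_i\neq 0$: the manifold $S^3_{n_i}(L_i)$ is a rational homology sphere, so $\mathfrak t$ is determined by its two restrictions. This is exactly where the hypothesis $n_i\neq0$ enters. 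By hypothesis $n_i\ge \widehat\nu(L_i)+0^{\widehat\nu(L_i)}$, so Theorem \ref{thm:vanishing HF for all spinc} applies in both cases ($\widehat\nu\ne0$ gives $n_i\ge\widehat\nu$; $\widehat\nu=0$ gives $n_i\ge1$). It says each $F_{X_{n_i}(L_i),\mathfrak s}$ vanishes on $\HFhat(S^3)$, and therefore every $F_{X^\circ,\mathfrak t}$ vanishes on $\HFhat$.

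Finally we contradict this using the filling. Plamenevskaya's result, in the form used in \cite{HomLidman19} and \cite{OS-genus}, applies to a strong (or weak) symplectic filling $(X,\omega)$ of $(Y,\xi)$ with $X^\circ$ the punctured filling. With $\mathfrak k$ the canonical $\spinc$ structure of $\omega$, the dual map
\[
F_{X^\circ,\mathfrak k}\colon \HFhat(-Y)\to\HFhat(S^3)
\]
(equivalently, the mixed map to $HF^+$) sends the contact class $c(\xi)$ to a generator. In particular it is nonzero.

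By duality, the map for $X^\circ$ reversed, from $-Y$ to $-S^3$, is the dual of the map from $S^3$ to $Y$ with the conjugate-orientation identification. So nonvanishing of one is equivalent to nonvanishing of the other for the corresponding $\spinc$ structure. This contradicts the previous step.

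The main obstacle is making this last step precise. We need to check that the Plamenevskaya/Ghiggini nonvanishing holds for the $\widehat{\HF}$ map, not just the $HF^+$ map, for strong fillings. We also need to check that the orientation and duality conventions really identify it with the map $F_{X_n,\mathfrak s}$ controlled by Theorem \ref{thm:vanishing HF for all spinc}. I would first try to quote the argument of \cite[Theorem 1.4]{HomLidman19} verbatim, replacing their input $d(S^3_1(K))=0$ with Theorem \ref{thm:vanishing HF for all spinc}. The case of Theorem \ref{thm:filling for traces} ($k=1$) also follows directly from this argument.
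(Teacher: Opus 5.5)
Your proposal is correct and follows the paper's proof essentially step for step. You attach the $L_i$ handle first, and use that $S^3_{n_i}(L_i)$ is a rational homology sphere (this is where $n_i\neq 0$ enters) so the composition law has a single term. You then kill every $F_{X,\mathfrak{s}}$ with Theorem \ref{thm:vanishing HF for all spinc} and contradict the nonvanishing statement of Proposition \ref{prop:non-vanishing map for filling}; note that what you do in your third paragraph is puncturing, not blowing up.

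The hat-versus-plus point you flag as the main obstacle is handled in that proposition exactly as your parenthetical suggests. Naturality of $i\colon \HFhat \to HF^+$ under cobordism maps, together with $c^+(\xi)=i_{-Y}(c(\xi))$, gives $i\bigl(F_{\overline{X},\mathfrak{s}}(c(\xi))\bigr)=c^+(\xi_{\std})\neq 0$ for the canonical $\spinc$ structure. So the hat map is already nonzero, and duality transfers this to $F_{X,\mathfrak{s}}\colon \HFhat(S^3)\to\HFhat(Y)$.
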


Throughout the paper, we will consider Heegaard Floer homology with coefficients in $\mathbb{F} = \mathbb{Z}/2\mathbb{Z}$, and we will use $\cong$ to indicate orientation-preserving diffeomorphism.

\section*{Organization}
In Section \ref{sec:mapcone}, we review the mapping cone construction of \cite{OS-integer}, and use it to prove Theorems \ref{thm:trace}, \ref{thm:trace epsilon}, \ref{thm:vanishing HF when summing}, and \ref{thm:vanishing HF for all spinc}. We  consider exotic traces in Section \ref{sec:exotictraces}, where we prove Theorem \ref{thm:exotic traces}; a key ingredient is work of Bodish \cite{Bodish}, who uses immersed curves to compute $\tau$ and $\varepsilon$ for certain generalized Mazur satellites. We show that our definition of $\nuhat$ agrees with that of Baldwin-Sivek in Section \ref{sec:BS}, where we also prove Theorem \ref{thm:dimHF}. Lastly, we obstruct symplectic fillings and prove Theorems \ref{thm:filling for traces} and \ref{thm:filling for 2 handles} in Section \ref{sec:symplecticfillings}.

\section*{Acknowledgements}
The authors thank Tye Lidman, JungHwan Park, Qiuyu Ren, and Fan Ye for helpful conversations, and Tom Mark for comments on an earlier draft.
\section{The mapping cone and cobordism maps}\label{sec:mapcone}
Our main tool will be the mapping cone formula of \cite{OS-integer}, which we briefly review here, primarily to establish notation, which agrees with the notation of \cite{HMP}. See also \cite[Section 2]{HKL}.

We consider the case of integer surgery on a knot $K$ in $S^3$. We will work with the hat-flavor of Heegaard Floer homology. Let $C(K) = \CFK^\infty(K)$, which as a vector space decomposes as $C(K) = \bigoplus_{i, j\in \Z} C(i,j)$. For a subset $X \subseteq \Z^2$, let 
\[ CX = \bigoplus_{(i,j) \in X} C(i,j), \]
and
\begin{align*}
	A_s &= C\{ \max\{i, j-s\} =0 \} \\
	B &= C\{i=0\}.
\end{align*}
Since our ambient manifold is $S^3$, we have that $H_*(B) = \HFhat(S^3) = \F$.
There are maps 
\begin{align*}
	v_s \colon A_s \to B \\
	h_s \colon A_s \to B 
\end{align*}
where $v_s$ consists of quotienting by $C\{ i < 0, j=s\}$ and $h_s$ consists of quotienting by $C\{ i=0, j<s\}$ followed by a homotopy equivalence between $C\{j=s\}$ and $C\{i=0\}$.

Consider the map 
\[ D_n \colon \bigoplus_{s \in \Z} A_s \to \bigoplus_{s \in \Z} B_s \]
where each $B_s=B$ and for $(s,x) \in \bigoplus_{s \in \Z} A_s$ (where the first entry indicates the summand in which $x$ lies),
\[ D_n(s,x) = (s, v_s(x)) + (s+n, h_s(x)). \]

We denote the mapping cone of $D_n$ by $\bbX_n$, and the main result of \cite{OS-integer} states that the homology of $\bbX_n$ is isomorphic to $\HFhat(S^3_n(K))$. Moreover, the map induced by inclusion of $B_s$ into $\bbX_n$ corresponds to the map
\[F_{X_n, s} \colon \HFhat(S^3) \to \HFhat(S^3_n(K)) \]
induced by the 2-handle cobordism
\[ X_n \colon S^3 \to S^3_n(K) \]
equipped with the $\spinc$-structure $\s_s$ characterized by the property
\[ \langle c_1(\s_s), \sigma \rangle + n = 2s \]
where $\sigma$ is a generator of $H_2(X_n; \Z)$. If $X_n$ is clear from context, we may write simply $F_{s}$ rather than $F_{X_n,s}$.

Since we are working with the hat-flavor, we may pass to homology before taking the mapping cone. Note that since $H_*(B_s) = \F$, the maps
\[ v_{s,*}, h_{s,*} \colon H_*(A_s) \to H_*(B) \]
are either surjective or trivial. In particular, the invariant $\nu(K)$ from \cite[Definition 9.1]{OS-rational} determines whether these maps are trivial or nontrivial, as follows:
\begin{enumerate}
	\item $v_{s,*}$ is surjective if and only if $s \geq \nu(K)$, and 
	\item $h_{s,*}$ is surjective if and only if $s \leq -\nu(K)$. 
\end{enumerate}
Throughout, we will denote the generator of $H_*(B_s) = \F$ by $y_s$.

We will also be interested in the invariant $\varep(K)$ from \cite{Hom-conc}, which plays a key role in the following lemma:
\begin{lemma}[{\cite[Lemma 4.1]{HMP}}]\label{lem:HMP}
The maps $v_s, h_s \colon A_s \to B$ induce the same nontrivial map in homology if and only if $s = \varepsilon(K) = 0$.
\end{lemma}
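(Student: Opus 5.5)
The plan is to reduce to a single Alexander grading using the characterization of $\nu(K)$ recalled above, and then to settle the remaining case with the structure theory of $\CFK^\infty$ behind the definition of $\varep$.

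\emph{Reduction to $s=0$, $\nu(K)=0$.} The common map is nontrivial only if both $v_{s,*}$ and $h_{s,*}$ are surjective, which forces $\nu(K)\le s\le -\nu(K)$, so $\nu(K)\le 0$. Recall the relations between $\nu$, $\tau$ and $\varep$ from \cite{Hom-cables, OS-rational}: $\nu(K)\in\{\tau(K),\tau(K)+1\}$, with $\nu=\tau$ when $\varep(K)\in\{0,1\}$ and $\nu=\tau+1$ when $\varep(K)=-1$. I will check that the only surviving case is $s=0$, $\nu(K)=0$, as follows:
\begin{itemize}
\item \emph{$\nu(K)<0$, so $s$ can be nonzero.} Then $\tau(K)<0$, hence $\varep(K)=-1$ and $\nu=\tau+1$.
\item \emph{The argument for $s\neq 0$.} Pass to $-K$ (mirror image), which exchanges the roles of $v$ and $h$ under the duality $C(-K)\simeq C(K)^*$. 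Then exhibit a class in $H_*(A_s)$ on which exactly one of $v_{s,*}, h_{s,*}$ is nonzero: the class of the generator of vertical homology, pushed into $A_s$.
\item \emph{Conclusion of the reduction.} Together with the trivial cases where one map vanishes, this leaves $s=0$ and $\nu(K)=0$ (so $\varep\in\{0,1\}$ with $\tau=0$, or $\varep=-1$ with $\tau=-1$).
\end{itemize}

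\emph{The case $\varep(K)=0$.} Here I use Hom's structural result: $\varep(K)=0$ if and only if
\[
\CFK^\infty(K)\simeq \F[U,U^{-1}]\langle x\rangle\oplus A',
\]
with $x$ in bifiltration $(0,0)$ and $A'$ acyclic. Both $v_0$ and $h_0$ respect this splitting. On the first summand the class $[x]\in H_*(A_0)$ maps to $y_0$ under both maps. For the $A'$ summand, I will choose a basis of $A'$ made of ``boxes'' and staircase pieces whose total homology vanishes. I will then check summand by summand that the induced maps $H_*(A'_0)\to H_*(A'\cap\{i=0\})=0$ agree, which is automatic because the target is zero. This proves $v_{0,*}=h_{0,*}\neq 0$.

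\emph{The case $\varep(K)\neq 0$ with $\nu(K)=0$.} Choose a vertically simplified basis, following the setup of \cite{Hom-cables}, with distinguished generator $x_0$ in bifiltration $(0,0)$. The defining property of $\varep\neq0$ is that $x_0$ is not also the distinguished generator for the horizontal homology. I will build an explicit cycle in $A_0=C\{\max(i,j-0)=0\}$ on which $v_{0,*}$ and $h_{0,*}$ differ:
\begin{itemize}
\item for $\varep=1$, the cycle is $x_0$ corrected by the horizontal partner it is paired with;
\item for $\varep=-1$, one argues dually, or replaces $K$ by $-K$.
\end{itemize}

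The main obstacle is this last step: making sure the corrected element really is a cycle in $A_0$, since horizontal arrows out of $x_0$ land in $\{i<0,\ j=0\}\subset A_0$, and then computing its images under the two maps. The key inputs are that $v_0$ kills exactly $C\{i<0,j=0\}$ and that $h_0$ kills $C\{i=0,j<0\}$. I expect that a careful choice of simultaneously well-behaved bases near $(0,0)$, using the reduction lemmas in \cite{Hom-cables}, will make this computation finite and explicit.
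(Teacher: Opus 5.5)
The paper imports this lemma from \cite{HMP} without proof, so your argument has to stand on its own. It does not yet, because the decisive case is left open. After the reduction, everything hinges on showing $v_{0,*}\neq h_{0,*}$ when $\tau(K)=\nu(K)=0$ and $\varep(K)=1$, and there you only state an expectation.

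The construction you suggest also cannot work as described. For $\varep(K)=1$, the horizontal partner of the vertically distinguished generator $x_0$ is the source $w$ of a horizontal arrow into $x_0$. That element sits at $(\ell,0)$ with $\ell>0$, so it is not in $A_0$ and cannot correct anything there. The real difficulty is that in an arbitrary vertically simplified basis, $x_0$ may have horizontal arrows into $C\{i<0,\ j=0\}\subset A_0$. You need an input that removes them while keeping control of $h_0$.

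That input is Hom's basis lemma for $\varep=1$ in \cite{Hom-cables}. It gives a horizontally simplified basis containing $x_0$ (the distinguished element of some vertically simplified basis) as the endpoint of a horizontal arrow. With this basis no correction is needed:
\begin{itemize}
\item $x_0$ has neither outgoing vertical nor outgoing horizontal arrows, so it is already a cycle in $A_0$;
\item $v_{0,*}[x_0]$ generates $H_*(B)$;
\item $h_{0,*}[x_0]=0$, because $x_0$ is a horizontal boundary in $C\{j=0\}$.
\end{itemize}

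Your reduction also needs repair.
\begin{itemize}
\item The duality $C(-K)\simeq C(K)^*$ does not exchange $v_s$ and $h_s$. It identifies $A_s(-K)$ with the dual of $C(K)\{\min(i,j+s)=0\}$, which is a different complex. What swaps $v$ and $h$ is the $i\leftrightarrow j$ symmetry of $C(K)$, which sends $A_s$ to $A_{-s}$ and fixes $\varep$.
\item The claim that $\tau(K)<0$ forces $\varep(K)=-1$ is false. The $(2,-5)$-cable of the right-handed trefoil has $\varep=1$ and $\tau=\nu=-1$.
\item The case $s=0$, $\nu(K)<0$ is never treated.
\item Replacing $K$ by $-K$ when $\varep(K)=-1$ and $\nu(K)=0$ leaves the setting, since then $\nu(-K)=1$.
\end{itemize}

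A correct reduction is short.
\begin{itemize}
\item If $\tau<s$, the vertically distinguished generator at $(0,\tau)$ is a cycle in $A_s$ on which $v_{s,*}$ is nonzero and $h_{s,*}$ vanishes. This holds because its non-vertical arrows land in $\{i<0,\ j<s\}$.
\item Symmetrically, if $s<-\tau$, the horizontally distinguished generator at $(s+\tau,s)$ is a cycle on which $h_{s,*}$ is nonzero and $v_{s,*}$ vanishes.
\end{itemize}
Combined with $\tau\le\nu\le s\le-\nu$, this leaves only $s=\tau=\nu=0$, hence $\varep\in\{0,1\}$. In particular the case $\varep=-1$ disappears with no duality argument.

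Your $\varep=0$ argument via the splitting is fine. For $h_0$ you also need $H_*(A'\{j=0\})=0$, which follows because $H_*(C\{j=0\})\cong\F$. The box/staircase basis of $A'$ is unnecessary.
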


Let $-K$ be the reverse of the mirror of $K$. We recall the relationships between $\tau$ \cite{OS-4ball}, $\nu$ \cite{OS-rational}, and $\varep$ \cite{Hom-conc}:
\begin{enumerate}
	\item $\tau(-K) = -\tau(K)$ and $\varep(-K) = -\varep(K)$,
	\item $\nu(K) = \tau(K)$ or $\tau(K)+1$,
	\item \begin{enumerate}
		\item  if $\nu(K) = \tau(K)+1$, then $\varep(K) = -1$,
		\item if $\nu(-K) = \tau(-K)+1$, then $\varep(K) = 1$, and
		\item if $\nu(K)=\tau(K)$ and $\nu(-K) = \tau(-K)$, then $\varep(K)=0$.
		\end{enumerate}
\end{enumerate}
These tools lead us to the following proposition:

\begin{proposition}\label{prop:nu01}
Let $n<0$.
\begin{enumerate}
	\item\label{it:nu1} If $\nu(K)=1$, then $F_{X_n, s} \neq 0$ if and only if $n \leq s \leq 0$, and $\bigoplus_s F_{X_n, s}$ has rank $|n|+1$.
	\item\label{it:nu0ep0} If $\nu(K)=0$ and $\varepsilon(K) = 0$, then $F_{X_n, s} \neq 0$ if and only if $n \leq s \leq 0$, and $\bigoplus_s F_{X_n, s}$ has rank $|n|$.
	\item\label{it:nu0ep1} If $\nu(K)=0$ and $|\varepsilon(K)| = 1$, then $F_{X_n, s} \neq 0$ if and only if $n+1 \leq s \leq -1$, and $\bigoplus_s F_{X_n, s}$ has rank $|n|-1$.
\end{enumerate}
\end{proposition}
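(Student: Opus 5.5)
The plan is to compute everything directly from the hat-flavored mapping cone $\bbX_n$. Since we may pass to homology before taking the cone, $F_{X_n,s}$ is the map $\F \to H_*(\bbX_n)$ sending $1$ to the class of $y_s$. Because $\bbX_n$ is the cone of $D_{n,*}\colon \bigoplus_s H_*(A_s) \to \bigoplus_s H_*(B_s)$, the class of $y_s$ vanishes exactly when $y_s$ lies in the image $\operatorname{Im} D_{n,*}$. So $F_{X_n,s}\neq 0$ if and only if $y_s \notin \operatorname{Im} D_{n,*}$. Moreover, the rank of $\bigoplus_s F_{X_n,s}$ equals the dimension of the span of the classes $[y_s]$ in the cokernel of $D_{n,*}$, which is the dimension of $\bigoplus_s \F\langle y_s\rangle / \operatorname{Im} D_{n,*}$.

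The key step is to describe $\operatorname{Im} D_{n,*}$ explicitly. Each $x \in H_*(A_s)$ maps to $v_{s,*}(x)\,y_s + h_{s,*}(x)\,y_{s+n}$. Since $n<0$, the $h$-component lands at the lower index $s+n$. Each of $v_{s,*}$ and $h_{s,*}$ is either zero or surjective, and the criteria in terms of $\nu$ recalled above tell us which. I will go case by case.

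\textbf{Case (1), $\nu(K)=1$.} Here $v_{s,*}$ is surjective iff $s\geq 1$ and $h_{s,*}$ is surjective iff $s\leq -1$, and at most one of them is nonzero for each $s$. Thus $A_s$ for $s\geq1$ contributes exactly $y_s$, and $A_s$ for $s\leq -1$ contributes exactly $y_{s+n}$, that is, all $y_t$ with $t\leq n-1$. The summand $A_0$ contributes nothing. Hence $\operatorname{Im} D_{n,*}$ is spanned by the $y_t$ with $t\notin[n,0]$. The survivors are precisely $y_n,\dots,y_0$, which are linearly independent in the cokernel, giving rank $|n|+1$.

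\textbf{Case (2), $\nu(K)=0$ and $\varep(K)=0$.} Now $v_{s,*}$ is surjective iff $s\geq 0$ and $h_{s,*}$ is surjective iff $s\leq 0$. The summands $A_s$ with $s\neq 0$ again kill $y_t$ for $t\geq 1$ and for $t\leq n-1$. At $s=0$, Lemma \ref{lem:HMP} says $v_{0,*}=h_{0,*}$ is a single nonzero functional. Hence the image of $H_*(A_0)$ is exactly the span of $y_0+y_n$. Consequently neither $y_0$ nor $y_n$ is in the image, nor is any $y_t$ with $n<t<0$, so $F_{X_n,s}\ne0$ exactly for $n\le s\le 0$. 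In the cokernel the relation $[y_0]=[y_n]$ holds, so the span has dimension $(|n|+1)-1=|n|$.

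\textbf{Case (3), $\nu(K)=0$ and $|\varep(K)|=1$.} This is the step needing slightly more care. Both $v_{0,*}$ and $h_{0,*}$ are nonzero, since $0\ge\nu$ and $0\le-\nu$. By Lemma \ref{lem:HMP}, with $\varep\neq 0$, they are not the same map. Two distinct nonzero linear functionals $H_*(A_0)\to\F$ have distinct kernels, so the combined map $(v_{0,*},h_{0,*})\colon H_*(A_0)\to \F^2$ is surjective. Hence both $y_0$ and $y_n$ individually lie in the image. Together with the contributions from $s\neq0$, as in Case (2), the surviving generators are exactly $y_t$ for $n+1\le t\le -1$. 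These are independent in the cokernel, so the rank is $|n|-1$.

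The only genuinely subtle point is this linear-algebra use of Lemma \ref{lem:HMP} at $s=0$ in Cases (2) and (3). Everything else follows from the $\nu$-criteria and the bookkeeping of the index shift by $n<0$.
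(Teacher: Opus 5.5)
Your proposal is correct and follows the paper's route: you identify $F_{X_n,s}\neq 0$ with $y_s\notin\operatorname{Im}D_{n,*}$, use the $\nu$-criteria to decide which $v_{s,*},h_{s,*}$ are surjective, and invoke Lemma~\ref{lem:HMP} at $s=0$ to decide whether $H_*(A_0)$ hits only $y_0+y_n$ or both $y_0$ and $y_n$. The only difference is bookkeeping: you describe $\operatorname{Im}D_{n,*}$ all at once and read the rank off the cokernel, which makes the linear independence of the surviving classes explicit, while the paper goes residue class by residue class mod $n$.
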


\begin{remark}
Above, when we consider the rank of the map $\bigoplus_s F_{X_n, s}$, we mean the map
\begin{align*}
	\bigoplus_s F_{X_n, s} \colon \bigoplus_s \HFhat(S^3) &\to \HFhat(S^3_n(K)), \\
							(x_s)_{s \in \Z} &\mapsto \sum_s F_{X_n,s}(x_s)
\end{align*}
which is different from the map
\begin{align*}
	 F_{X_n} \colon \HFhat(S^3) &\to  \HFhat(S^3_n(K)) \\
	 	x &\mapsto \sum_s F_{X_n,s}(x)
\end{align*}
obtained by summing over all $\spinc$-structures on $X_n$.
\end{remark}

\begin{proof}
\eqref{it:nu1} Let $\nu(K)=1$. Recall that since $\nu(K)=1$, the map $v_{s,*}$ is surjective if and only if $s \geq 1$, and  $h_{s,*}$ is surjective if and only if $s \leq -1$. 

We first consider $F_{X_n, s}$ when $s \equiv 0$ mod $n$. The relevant part of the mapping cone formula is shown below:
\begin{equation}\label{eq:nu1s0}
\begin{tikzcd}
	& {H_*(A_{-2|n|})} & {H_*(A_{-|n|})} & {H_*(A_0)} & {H_*(A_{|n|})} & {H_*(A_{2|n|})} \\
	\dots &&&&& \dots \\
	& {H_*(B_{-2|n|})} & {\bm{H_*(B_{-|n|})}} & {\bm{H_*(B_0)}} & {H_*(B_{|n|})}
	\arrow["0", from=1-2, to=3-2]
	\arrow[two heads, from=1-3, to=3-2]
	\arrow["0", from=1-3, to=3-3]
	\arrow["0"', from=1-4, to=3-3]
	\arrow["0", from=1-4, to=3-4]
	\arrow["0"', from=1-5, to=3-4]
	\arrow[two heads, from=1-5, to=3-5]
	\arrow["0"', from=1-6, to=3-5]
\end{tikzcd}
\end{equation}
For $s \equiv 0$ mod $n$, $s \leq -2|n|$, we see that $H_*(B_s)$ is in the image of $D_{n,*}(H_*(A_{s+|n|})$, implying that for such $s$, the map $F_{X_n, s}$ is trivial. Similarly, for $s \equiv 0$ mod $n$, $s \geq |n|$, we see that $H_*(B_s)$ is in the image of $D_{n,*}(H_*(A_{s}))$, implying that for such $s$, the map $F_{X_n, s}$ is trivial.

Lastly, we consider $s=-|n|$ and $s=0$. Since the maps from $H_*(A_{-|n|})$ and $H_*(A_0)$ to $H_*(B_{-|n|})$ are both zero, it follows that $F_{-|n|}$ is nontrivial, and similarly for $F_0$. Furthermore, we observe that $y_{-|n|}$ and $y_0$ (the generators of $H_*(B_{-|n|})$ and $H_*(B_{0})$ respectively) represent distinct elements in $H_*(\Cone(D_n))$. Hence it follows that the image of $F_{-|n|}$ and $F_0$ are distinct in $\HFhat(S^3_n(K))$.
 
We now consider $F_{X_n, s}$ when $s \not\equiv 0$ mod $n$. The relevant part of the mapping cone is shown below, where $i=1, \dots, {|n|}-1$:
\begin{equation}\label{eq:nu1i}
\begin{tikzcd}
	& {H_*(A_{-2|n|+i})} & {H_*(A_{-|n|+i})} & {H_*(A_i)} & {H_*(A_{|n|+i})} & {H_*(A_{2|n|+i})} \\
	 \dots &&&&& \dots \\
	& {H_*(B_{-2|n|+i})} & {\bm{H_*(B_{-|n|+i})}} & {H_*(B_i)} & {H_*(B_{|n|+i})}
	\arrow["0", from=1-2, to=3-2]
	\arrow[two heads, from=1-3, to=3-2]
	\arrow["0", from=1-3, to=3-3]
	\arrow["0"', from=1-4, to=3-3]
	\arrow[two heads, from=1-4, to=3-4]
	\arrow["0"', from=1-5, to=3-4]
	\arrow[two heads, from=1-5, to=3-5]
	\arrow["0"', from=1-6, to=3-5]
\end{tikzcd}
\end{equation}

For $s \not\equiv 0$ mod $n$, $s \leq -2|n|+i$, we see that $H_*(B_s)$ is in the image of $D_{n,*}(H_*(A_{s+|n|}))$, implying that for such $s$, the map $F_{X_n, s}$ is trivial. Similarly, for $s \not\equiv 0$ mod $n$, $s \geq i$, we see that $H_*(B_s)$ is in the image of $D_{n,*}(H_*(A_{s})$, implying that for such $s$, the map $F_{X_n, s}$ is trivial.

When $s=-|n|+i$, we see that $y_{-|n|+i}$, the generator of $H_*(B_{-|n|+i})$, represents a nontrivial element in $H_*(\Cone(D_n))$, since $v_{-|n|+i, *}$ and $h_i$ are both zero.

To summarize, we have shown that the generators $y_s$ of the $H_*(B_s)$ shown in bold (namely, $H_*(B_{-|n|}), H_*(B_{-|n|+1}), \dots, H_*(B_{0})$) are nontrivial and distinct in $H_*(\Cone(D_n))$. That is, $F_{X_n, s}$ is nontrivial exactly when $n \leq s \leq 0$, and $\bigoplus_s F_{X_n, s}$ has rank $|n|+1$, as desired.

\eqref{it:nu0ep0} Now suppose that $\nu(K)=0$ and $\varepsilon(K)=0$. Recall that since $\nu(K)=0$, the map $v_{s,*}$ is surjective if and only if $s \geq 0$, and  $h_{s,*}$ is surjective if and only if $s \leq 0$. 

When $s \equiv 0$ mod $n$, we have
\begin{equation}\label{eq:nu0ep0}
\begin{tikzcd}
	& {H_*(A_{-2|n|})} & {H_*(A_{-|n|})} & {H_*(A_0)} & {H_*(A_{|n|})} & {H_*(A_{2|n|})} \\
	\dots &&&&& \dots \\
	& {H_*(B_{-2|n|})} & {\bm{H_*(B_{-|n|})}} & {\bm{H_*(B_0)}} & {H_*(B_{|n|})}
	\arrow["0", from=1-2, to=3-2]
	\arrow[two heads, from=1-3, to=3-2]
	\arrow["0", from=1-3, to=3-3]
	\arrow[two heads, from=1-4, to=3-3]
	\arrow[two heads, from=1-4, to=3-4]
	\arrow["0"', from=1-5, to=3-4]
	\arrow[two heads, from=1-5, to=3-5]
	\arrow["0"', from=1-6, to=3-5]
\end{tikzcd}
\end{equation}
The key difference between \eqref{eq:nu1s0} and  \eqref{eq:nu0ep0} is that now $h_{0,*}$ and $v_{0,*}$ are both surjective. Furthermore, since $\varepsilon(K)=0$, Lemma \ref{lem:HMP} implies that $v_{0,*}$ and $h_{0,*}$ induce the same nontrivial map on homology. In particular, $y_{-|n|}$ and $y_0$ (the generators of $H_*(B_{-|n|})$ and $H_*(B_0)$ respectively) both represent the same nontrivial elements in $H_*(\Cone(D_n))$, since there exists an element $x$ in $H_*(A_0)$ such that $D_{n, *}(x) = y_{-|n|}+y_0$. That is, $F_{-|n|}$ and $F_0$ are both nontrivial, and their images agree.

When $s \not\equiv 0$ mod $n$, the mapping cone is as in \eqref{eq:nu1i}, and the map $F_{X_n, s}$ is nontrivial exactly when $s=n+1, n+2, \dots, -1$, as above.

Thus, $F_{X_n, s}$ is nontrivial exactly when $n \leq s \leq 0$, and $\bigoplus_s F_{X_n, s}$ has rank $|n|$, as desired.

\eqref{it:nu0ep1} The final case to consider is $\nu(K)=0$ and $|\varepsilon(K)|=1$. When $s \equiv 0$ mod $n$, the mapping cone again is an in \eqref{eq:nu0ep0}, the key difference now being that the rank of $D_{n,*}(H_*(A_0))$ has rank 2 instead of rank 1 as in the $\varep(K)=0$ case. In particular, $H_*(B_{-|n|})$ and $H_*(B_0)$ are now both in the image of $D_{n,*}$, and so $F_{n}$ and $F_0$ are both zero. When $s \not\equiv 0$ mod $n$, the mapping cone is as in \eqref{eq:nu1i}, and so the map $F_{X_n, s}$ is nontrivial exactly when $s=n+1, n+2, \dots, -1$, as above. Hence, $F_{X_n, s}$ is nontrivial exactly when $n+1 \leq s \leq -1$, and $\bigoplus_s F_{X_n, s}$ has rank $|n|-1$, as desired.
\end{proof}

With Proposition \ref{prop:nu01} in hand, we are now ready to prove Theorems \ref{thm:trace} and \ref{thm:trace epsilon}. The key point is to consider $\bigoplus_s F_{X_n, s}$ instead of just $F_{X_n, s}$.

\begin{proof}[Proof of Theorem \ref{thm:trace}]
As noted in the introduction, Theorem 1.4 of \cite{HMP} states that if the oriented knot traces $X_n(K)$ and $X_n(K')$ are diffeomorphic, then $\nu(K)=\nu(K')$, except possibly if $n<0$ and $\{ \nu(K), \nu(K')\} = \{0, 1\}$. Thus, in order to prove Theorem \ref{thm:trace}, we need to show that if $n<0$ and $\nu(K)=0$ and $\nu(K')=1$, then $X_n(K)$ and $X_n(K')$ are not diffeomorphic.

The desired result follows from Proposition \ref{prop:nu01}. Indeed, if $n<0$ and $\nu(K)=0$ and $\nu(K')=1$, then the ranks of  $\bigoplus_s F_{X_n(K), s}$ and $\bigoplus_s F_{X_n(K'), s}$ are different; namely, $\bigoplus_s F_{X_n(K), s}$ has rank $|n|$ or $|n|-1$ (depending on $\varep(K)$), while $\bigoplus_s F_{X_n(K'), s}$ has rank $|n|+1$.
\end{proof}

\begin{proof}[Proof of Theorem \ref{thm:trace epsilon}]
We need to show that if $X_n(K)$ and $X_n(K')$ are diffeomorphic, then $|\varep(K)| = |\varep(K')|$. Recall from \cite[Section 3]{Hom-cables} that if $\nu(K) \neq 0$, then $\varep(K) \neq 0$. Thus, by Theorem \ref{thm:trace}, we may assume that $\nu(K)=\nu(K')=0$. Under this assumption, we now need to show that if $\varep(K)=0$ and $|\varep(K')|=1$, then $X_n(K)$ and $X_n(K')$ are not diffeomorphic.

We first consider the case where $n<0$. By Proposition \ref{prop:nu01}, we have that $\bigoplus_s F_{X_n(K), s}$ has rank $|n|$ and $\bigoplus_s F_{X_n(K'), s}$ has rank $|n|-1$.

The case $n>0$ proceeds similarly, by considering $-\big(X_n(K)\big) = X_{-n}(-K)$, and using the fact that $\varep(-K)=-\varep(K)$.

Lastly, we consider $n=0$. We first consider $X_0(K)$, where $\varep(K)=0$. Consider the summand of the mapping cone corresponding to $\s_0$:
\begin{equation}\label{eq:n=0}
\begin{tikzcd}
	{H_*(A_0)} \\
	{} \\
	{H_*(B_0)}
	\arrow["{v_{0,*}+h_{0,*}}", from=1-1, to=3-1]
\end{tikzcd}
\end{equation}
By Lemma \ref{lem:HMP}, we have that $v_{0,*}+h_{0,*}=0$, and hence $F_{X_0(K), 0}$ is injective.

We now consider $X_0(K')$, where $|\varep(K')| = 1$. Since $\nu(K')=0$, we have that $v_{0,*}$ and $h_{0,*}$ are both surjective. However, since $|\varep(K')| = 1$, Lemma \ref{lem:HMP} implies that $v_{0,*}$ and $h_{0,*}$ are distinct. Hence $v_{0,*}+h_{0,*}$ is surjective, and so $F_{X_0(K'), 0}$ is the zero map. Since $F_{X_0(K), 0}$ is injective and $F_{X_0(K'), 0}$ is not, it follows that $X_0(K)$ and $X_0(K')$ are not diffeomorphic. This completes the proof.
\end{proof}

We now prove Theorems \ref{thm:vanishing HF when summing} and \ref{thm:vanishing HF for all spinc}. For our proof strategy, it is convenient to rephrase these theorems as Propositions \ref{prop:nuhat0} and \ref{prop:nuhatneq0} below. Note that part \eqref{it:nuhat0prop1} of both Propositions \ref{prop:nuhat0} and \ref{prop:nuhatneq0} follows from \cite[Proposition 4.2]{HMP}; we include the statements and proofs here for completeness.
Recall that $F_{X_n}$ is the map obtained by summing $F_{X_n,s}$ over all $\spinc$-structures on $X_n$.

\begin{proposition}\label{prop:nuhat0}
If $\nuhat(K) = 0$, then 
\begin{enumerate}
	\item\label{it:nuhat0prop1} the map $F_{X_n,s}$ is trivial for all $s$ if and only if $n \geq 1$, and
	\item\label{it:nuhat0prop2} the map $F_{X_n}$ is trivial if and only if $n=-1$ or $n \geq 1$.
\end{enumerate}
\end{proposition}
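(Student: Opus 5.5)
Since $2\nu(K)-1$ is odd, $\nuhat(K)=0$ holds exactly when $\varep(K)=0$. By \cite[Section 3]{Hom-cables}, $\nu(K)\neq 0$ forces $\varep(K)\neq 0$, so we are in the situation $\nu(K)=0$ and $\varep(K)=0$. On homology this means: $v_{s,*}$ is surjective iff $s\geq 0$, $h_{s,*}$ is surjective iff $s\leq 0$, and $v_{0,*}=h_{0,*}\neq 0$ by Lemma \ref{lem:HMP}. I would split into the three cases $n<0$, $n=0$, $n>0$ and use the mapping cone $\bbX_n$ directly, computing with homology of the $A_s$ and $B_s$ as in the proof of Proposition \ref{prop:nu01}.

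\emph{Case $n<0$.} Proposition \ref{prop:nu01}\eqref{it:nu0ep0} says $F_{X_n,s}\neq 0$ exactly for $n\leq s\leq 0$. So the maps are not all trivial, which gives the ``only if'' direction of \eqref{it:nuhat0prop1} for negative $n$.

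For \eqref{it:nuhat0prop2}, note that $F_{X_n}(1)$ is the class of $\sum_{s=n}^{0} y_s$ in $H_*(\bbX_n)$. From the proof of Proposition \ref{prop:nu01}:
\begin{itemize}
\item $y_n$ and $y_0$ represent the same nonzero class, since $D_{n,*}$ of an element of $H_*(A_0)$ equals $y_n+y_0$, so these two terms cancel;
\item for $n+1\leq s\leq -1$, the classes $y_s$ are nonzero.
\end{itemize}
The cone splits as a direct sum over residues of $s$ mod $n$ (the $\spinc$ decomposition of $S^3_n(K)$), so the $y_s$ for $n+1\le s\le -1$ lie in distinct summands, and none of them lies in the residue class $0$ summand containing $y_0$. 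Hence they are linearly independent. Therefore $F_{X_n}(1)=\sum_{s=n+1}^{-1}[y_s]$, which vanishes iff this range is empty, i.e.\ iff $n=-1$.

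\emph{Case $n=0$.} The proof of Theorem \ref{thm:trace epsilon} shows that $F_{X_0,0}$ is injective, since $v_{0,*}+h_{0,*}=0$. For $n=0$, each $s$ gives its own summand of the cone. Hence $F_{X_0}(1)$ has nonzero component $F_{X_0,0}(1)$, and both $F_{X_0,0}$ and $F_{X_0}$ are nontrivial.

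\emph{Case $n>0$.} I will show every $y_s$ lies in the image of $D_{n,*}$, so all $F_{X_n,s}$ vanish, and then $F_{X_n}$ does too.
\begin{itemize}
\item For $s<0$: take $x\in H_*(A_{s-n})$ with $h_{s-n,*}(x)=y_s$, which exists since $s-n<0$. Then $v_{s-n,*}(x)=0$, also because $s-n<0$.
\item For $s>0$: take $x\in H_*(A_s)$ with $v_{s,*}(x)=y_s$. Then $h_{s,*}(x)=0$ since $s>0$.
\item For $s=0$: take $x\in H_*(A_0)$ with $v_{0,*}(x)=y_0$. Then $h_{0,*}(x)=y_n$. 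Correct this by adding $x'\in H_*(A_n)$ with $v_{n,*}(x')=y_n$, noting $h_{n,*}(x')=0$. Then $D_{n,*}(x+x')=y_0$.
\end{itemize}
This proves the ``if'' directions in both parts for $n\geq 1$.

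\emph{Main obstacle.} The one delicate point is justifying the linear independence in the $n<0$ case. I would handle it entirely through the residue-mod-$n$ splitting of $\bbX_n$, together with the already-established nontriviality statements of Proposition \ref{prop:nu01}.
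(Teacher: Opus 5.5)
Your proposal is correct and takes essentially the same route as the paper. You reduce to $\nu(K)=\varep(K)=0$, use Proposition \ref{prop:nu01} for $n<0$ and the $n=0$ cone with Lemma \ref{lem:HMP}, show every $y_s$ lies in the image of $D_{n,*}$ when $n\ge 1$, and use the cancellation $[y_n]=[y_0]$ to get $F_{X_{-1}}=0$. Your explicit residue-mod-$n$ splitting for $n<-1$ simply spells out what the paper means when it says $F_{X_n}$ has nontrivial image in the $\s_{-1}|_{S^3_n(K)}$ summand.
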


\begin{proof}
Suppose that $\nuhat(K) = 0$, in which case $\varep(K) = 0$. By Proposition \ref{prop:nu01}, if $n<0$, then $F_{X_n, s}$ is nonzero for $n \leq s \leq 0$. If $n=0$, then by considering \eqref{eq:n=0} and applying Lemma \ref{lem:HMP}, we have that $F_{X_0, 0}$ is nonzero. Lastly, suppose that $n \geq 1$. Recall that $\varep(K)=0$ implies that $\nu(K)=0$, and
\begin{enumerate}
	\item $v_{s,*}$ is surjective if and only if $s \geq \nu(K)$, and 
	\item $h_{s,*}$ is surjective if and only if $s \leq -\nu(K)$. 
\end{enumerate}
In particular, for $s>0$, we have that $H_*(B_s)$ is in the image of $D_{n,*}(H_*(A_s))$, and for $s<0$, we have that $H_*(B_s)$ is in the image of $D_{n,*}(H_*(A_{s-n}))$.  For $s=0$, we use Lemma \ref{lem:HMP} to conclude that there exists an element $x \in H_*(A_0)$ such that $D_{n,*}(x)= y_0 + y_n$, where $y_0$ and $y_n$ are generators of $H_*(B_0)$ and $H_*(B_n)$ respectively. Hence $y_0$ and $y_n$ are equivalent in $H_*(\Cone(D_{n,*}))$, and we have already concluded that $y_n$ is trivial in $H_*(\Cone(D_{n,*}))$. Hence $F_{X_n,s}$ is zero for all $s$ when $n\geq 1$.

Summing over $\spinc$-structures, we have that $F_{X_n}$ is trivial when $n \geq 1$. Furthermore, we see that $F_{X_n}$ is nontrivial when $n<-1$ or $n=0$; the former since $F_{X_n}$ has nontrivial image in $\HFhat(S^3_n(K), \s_{-1}|_{S^3_n(K)})$, and the latter since $F_{X_0}$ has nontrivial image in $\HFhat(S^3_0(K), \s_0|_{S^3_0(K)})$. Lastly, we consider the case $n=-1$. By Proposition \ref{prop:nu01}, we have that $F_{X_{-1}, s}$ is nonzero if and only if $s\in\{-1, 0\}$ but by the argument immediately following \eqref{eq:nu0ep0}, we have that the images of $F_{X_{-1}, -1}$ and $F_{X_{-1}, 0}$, namely $y_{-1}$ and $y_0$, represent the same element in $H_*(\Cone(D_n))$, implying that when we sum over all $\spinc$-structures, the map is zero.
\end{proof}

\begin{proposition}\label{prop:nuhatneq0}
If $\nuhat(K) \neq 0$, then 
\begin{enumerate}
	\item\label{it:nuhatneq0prop1} the map $F_{X_n,s}$ is trivial for all $s$ if and only if $n \geq \nuhat(K)$, and
	\item\label{it:nuhatneq0prop2} the map $F_{X_n}$ is trivial if and only if $n \geq \nuhat(K)$.
\end{enumerate}
\end{proposition}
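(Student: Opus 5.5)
We will prove both parts at once by computing the image of $D_{n,*}$ in $\bigoplus_s H_*(B_s)$ directly, as in the proof of Proposition \ref{prop:nu01}. Since $\nuhat(K)\neq 0$, we have $|\varep(K)|=1$, so $\nuhat(K)=2\nu(K)-1$. Lemma \ref{lem:HMP} then says that $v_{s,*}$ and $h_{s,*}$ never induce the same nontrivial map on homology, for any $s$.

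\emph{Step 1: the image of $D_{n,*}$ for $n\neq 0$.} The key observation is that whenever $v_{s,*}$ and $h_{s,*}$ are both surjective onto $\F$, they are distinct nonzero linear functionals on $H_*(A_s)$ over $\F=\Z/2\Z$. Two such functionals are linearly independent, so $(v_{s,*},h_{s,*})\colon H_*(A_s)\to \F\oplus\F$ is surjective. Hence $D_{n,*}(H_*(A_s))$ is spanned as follows:
\begin{itemize}
\item by $y_s$ if only $v_{s,*}$ is nonzero;
\item by $y_{s+n}$ if only $h_{s,*}$ is nonzero;
\item by both $y_s$ and $y_{s+n}$ if both are nonzero;
\item by nothing otherwise.
\end{itemize}
Consequently the image of $D_{n,*}$ is the span of the basis vectors
\[
\{y_s : s\ge \nu(K)\}\cup\{y_{s+n} : s\le -\nu(K)\}.
\]
Because this image is spanned by a subset of the basis $\{y_t\}$, the surviving generators $y_t$ are nonzero and linearly independent in the cokernel, and hence in $H_*(\Cone(D_n))$. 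A generator $y_t$ survives exactly when $t<\nu(K)$ and $t-n>-\nu(K)$, that is, when
\[
n-\nu(K)<t<\nu(K).
\]
Such an integer $t$ exists if and only if $n\le 2\nu(K)-2$, i.e.\ $n<\nuhat(K)$. This proves \eqref{it:nuhatneq0prop1} for $n\neq 0$.

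\emph{Step 2: the case $n=0$.} Here $D_{0,*}$ restricted to $H_*(A_s)$ is $v_{s,*}+h_{s,*}$ landing in $H_*(B_s)$, so the rank-two argument above does not apply. We split according to the sign of $\nu(K)$.
\begin{itemize}
\item If $\nu(K)\ge 1$, then $0<\nuhat(K)$. Both $v_{0,*}$ and $h_{0,*}$ vanish, so $y_0$ survives and $F_{X_0,0}\neq 0$, consistent with the claim.
\item If $\nu(K)\le 0$, then $0\ge \nuhat(K)$. For every $s$ at least one of $v_{s,*},h_{s,*}$ is surjective. If exactly one is, the sum $v_{s,*}+h_{s,*}$ is surjective. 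If both are, they are distinct by Lemma \ref{lem:HMP}, so again the sum is surjective. Thus every $y_s$ dies, as claimed.
\end{itemize}

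\emph{Step 3: part \eqref{it:nuhatneq0prop2}.} The image of $F_{X_n}$ is $\sum_t y_t$ taken over the surviving $t$. By the linear independence established in Steps 1 and 2, this sum is nonzero precisely when some $y_t$ survives. For $n=0$ the surviving classes lie in distinct $\spinc$ summands, so independence is automatic there. Hence $F_{X_n}$ vanishes if and only if all the $F_{X_n,s}$ vanish, i.e.\ if and only if $n\ge\nuhat(K)$.

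The main point requiring care is the rank-two claim in Step 1: that two distinct surjections $H_*(A_s)\to\F$ jointly surject onto $\F^2$. This is exactly where $\varep\neq 0$ enters, and it is what distinguishes this proposition from the cancellation $y_{-1}=y_0$ seen in Proposition \ref{prop:nuhat0}.
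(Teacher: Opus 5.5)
Your proof is correct and takes essentially the same route as the paper: pass to homology in the mapping cone, use the surjectivity thresholds $s\ge\nu(K)$ for $v_{s,*}$ and $s\le-\nu(K)$ for $h_{s,*}$, and use Lemma \ref{lem:HMP} to kill each $y_s$ when $n\ge\nuhat(K)$ (your rank-two observation is what the paper's terse ``by Lemma \ref{lem:HMP}'' step leaves implicit). The differences are organizational. The paper exhibits only the survivor $y_{\nu(K)-1}$ and treats $n=0$ within the same argument, whereas you compute the whole cokernel, which also shows that for $n\neq 0$ the map $F_{X_n,s}$ is nonzero exactly when $n-\nu(K)<s<\nu(K)$.
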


\begin{proof}
Throughout, let $\nuhat(K) \neq 0$. 

Suppose that $n < \nuhat(K)$. We will prove that $F_{X_n, \nu(K)-1}$ and $F_{X_n}$ are nontrivial. Consider $H_*(B_{\nu(K)-1})$. We claim that $H_*(B_{\nu(K)-1})$ is not in the image of $D_{n,*}$. It is sufficient to show that neither $v_{\nu(K)-1,*}$ nor $h_{\nu(K)-1-n,*}$ are surjective. Recall that 
\begin{enumerate}
	\item $v_{s,*}$ is surjective if and only if $s \geq \nu(K)$, and 
	\item $h_{s,*}$ is surjective if and only if $s \leq -\nu(K)$. 
\end{enumerate}
Then it is immediate that $v_{\nu(K)-1,*}$ is not surjective. To see that $h_{\nu(K)-1-n,*}$ is not surjective, we observe that by hypothesis, $n < \nuhat(K) = 2\nu(K)-1$, which implies that $-\nu(K) < \nu(K)-1-n$. Hence $h_{\nu(K)-1-n,*}$ is not surjective. Thus, when $\nuhat(K) \neq 0$ and $n < \nuhat(K)$, the map $F_{X_n, \nu(K)-1}$ is nontrivial. Furthermore, since $v_{\nu(K)-1,*}$ and $h_{\nu(K)-1-n,*}$ are both the zero map, the image of $F_{X_n, \nu(K)-1}$, namely $y_{\nu(K)-1}$, is not homologous to the image of any other map $F_{X_n, s}$, where $s \neq \nu(K)-1$. Hence $F_{X_n}$ is nontrivial.

Now suppose that $n \geq \nuhat(K)$. Consider $H_*(B_s)$. If $s \geq\nu(K)$, then $v_{s, *}$ is surjective. Furthermore, since $\nuhat(K) \neq 0$, it follows that $\varep(K) \neq 0$, and so by Lemma \ref{lem:HMP}, we have that $H_*(B_s)$ is in the image of $D_{n,*}(H_*(A_s))$. Hence $F_{X_n, s}$ is trivial for $s \geq\nu(K)$. If $s < \nu(K)$, then we claim that $H_*(B_s)$ is in the image of $D_{n,*}(H_*(A_{s-n}))$. Indeed, the inequalities $s \leq \nu(K)-1$ and $n \geq \nuhat(K) = 2\nu(K)-1$ imply that $s-n \leq -\nu(K)$. Hence $h_{s-n,*}$ is surjective, and then by Lemma \ref{lem:HMP}, we have that $y_{s}$ is in the image of $D_{n,*}$. Thus $F_{X_n,s}$ is trivial for all $s$, as desired.
\end{proof}

\section{Exotic traces}\label{sec:exotictraces}
\subsection{Yasui's family of knot trace pairs}
Yasui \cite{Yasui} considered the two patterns shown in Figure \ref{fig:YasuiPQ}, and proved the following lemma:

\begin{lemma}[{\cite[Lemma 4.3]{Yasui}}]
For any integers $n$ and $m$, and any knot $K$ in $S^3$, the knot traces $X_n(P_{n,m}(K))$ and $X_n(Q_{n,m}(K))$ are homeomorphic.
\end{lemma}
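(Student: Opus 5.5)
The plan is to build an explicit Kirby-calculus equivalence between the two knot traces and then use Freedman's theorem to upgrade it to a homeomorphism. I would not try to exhibit a diffeomorphism, which does not exist in the cases of interest.

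The natural tool is the dualizable-pattern, or RBG-link, framework used for exotic trace pairs (Akbulut, Lickorish, Piccirillo). One finds a three-component link $R\cup B\cup G$ with framings so that:
\begin{itemize}
\item cancelling the $R$-handle against $B$ yields the $n$-framed knot $P_{n,m}(K)$;
\item cancelling $R$ against $G$ instead yields $Q_{n,m}(K)$, with the same framing $n$.
\end{itemize}
Running this construction for every $K$, $n$, $m$ would in fact give a diffeomorphism $X_n(P_{n,m}(K))\cong X_n(Q_{n,m}(K))$ or of their boundaries. That is too strong, since the pairs are supposed to be exotic.

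So the realistic target is the weaker statement, and the key steps are:
\begin{enumerate}
\item \textbf{Boundary diffeomorphism.} Show $S^3_n(P_{n,m}(K))\cong S^3_n(Q_{n,m}(K))$ via a handle-slide or annulus-twist description. The $n$ and $-m$ twist boxes in Figure \ref{fig:YasuiPQ} are designed so that one can slide over the companion-strand band, and the diagram for $Q_{n,m}$ is then recognized as the dual description of $P_{n,m}$. This is where I would put the real work.
\item \textbf{Extension of the boundary map.} Argue that the boundary diffeomorphism sends a meridian to a curve whose induced map on $H_1$ is compatible with the trace homology. Both traces have $H_2\cong\Z$ with intersection form $(n)$ and are simply connected.
\item \textbf{Freedman.} Apply Freedman's theorem: compact simply connected $4$-manifolds with homology-sphere-or-lens boundary are determined by the intersection form together with the boundary diffeomorphism compatible with it, up to the Kirby--Siebenmann invariant. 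Both Kirby--Siebenmann invariants vanish since the manifolds are smooth.
\end{enumerate}
Alternatively, one could use Boyer's extension theorem for simply connected $4$-manifolds with the same boundary and isometric forms.

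There is a useful sanity check via the shaded band in Figure \ref{fig:YasuiPQ}. The concordance from $Q_{n,m}$ to the identity pattern shows $Q_{n,m}(K)$ is concordant to $K$, and in the RBG picture $P_{n,m}(K)$ should be ribbon-concordant to a similar satellite. This is consistent with the traces being homeomorphic, e.g.\ via topological concordance results of Conway--Piccirillo style: traces of knots sharing the same $n$-surgery with a homologically compatible map are homeomorphic.

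The main obstacle is Step 1: finding the explicit sequence of handle moves identifying the two $n$-surgeries for arbitrary companion $K$. I would first do it with $K$ the unknot, where it should reduce to Akbulut's $-5_2$ versus $P(3,-3,8)$ picture. I would then observe that all moves take place in the solid torus complement of the companion, so they persist after satellite operation.
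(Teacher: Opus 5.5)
Your outline has a genuine gap. The geometric input the argument needs is an explicit identification $S^3_n(P_{n,m}(K))\cong S^3_n(Q_{n,m}(K))$, together with its effect on $H_1$. You never produce it (``this is where I would put the real work''), and Steps 2--3 cannot be checked without it.

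The shortcut you propose does not close the gap. For $K$ the unknot, Akbulut's $-5_2$ versus $P(3,-3,8)$ picture covers only $n=-1$, $m=0$, so general $n,m$ still need the full argument. Transplanting an unknot-companion argument to arbitrary $K$ also needs the diffeomorphism of the surgered pattern solid torus to be supported inside it and to be the identity on its boundary. You assert this but do not justify it, and handle slides and isotopies in a diagram routinely pass across the axis.

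Step 3 is stated too narrowly. The version of Freedman's theorem you quote does not cover $n=0$, where $\partial X_0$ has $b_1=1$ and the form is even. For $n\neq 0$, the compatibility condition in Boyer's theorem is not automatic: $\mathbb{Z}/|n|$ can have units $a\not\equiv\pm 1$ with $a^2\equiv 1$ (e.g.\ $n=8$, $a=3$). So it depends on the map you have not constructed. Also, a diffeomorphism of the boundaries is not ``too strong''; the conclusion forces it.

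The missing idea is the one the paper cites from Yasui: a cork twist. Handle calculus exhibits a contractible submanifold $C$ in the interior of $X_n(P_{n,m}(K))$, a dotted-circle/$0$-framed pair, with the companion entering only through the part of the trace outside $C$. Cutting out $C$ and regluing by an involution $\tau$ of the homology sphere $\partial C$ (exchanging dot and $0$) yields $X_n(Q_{n,m}(K))$. By Freedman, every self-homeomorphism of a homology $3$-sphere extends over a contractible $4$-manifold it bounds. So $\tau$ extends over $C$, and the two traces are homeomorphic. This works uniformly in $K$, $n$, $m$, including $n=0$. Your Step 1 comes for free because $C$ lies in the interior, and the Boyer-type compatibility analysis becomes unnecessary.
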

\noindent The proof of the above lemma relies on using handle calculus to show that the manifolds differ by a cork twist.

We will use Theorems \ref{thm:trace} and \ref{thm:trace epsilon} to show that under certain hypotheses, these traces are not diffeomorphic.

\subsection{Tau, epsilon, and nu of Yasui's satellite knots}
We observe that $P_{n,0}(K)$ in Figure \ref{fig:YasuiPQ} is the same knot as $Q^{0,1}_n(K)$ in the notation of \cite{Bodish}; throughout, we will denote this knot by $P_{n,0}(K)$. We summarize the results of \cite[Theorem 1.6, Theorem 1.7, and Section 5]{Bodish} in Table \ref{tab:Bodish}.

\begin{table}[htb!]
\begin{tabular}{c|cccc}
                          & $\varepsilon(K)=-1$ & $\varepsilon(K)=0$                                                & $\varepsilon(K)=1$                                                                           &  \\
                          \hline
$\tau(P_{n,0}(K))$        & $\tau(K)+1$         & \begin{tabular}[c]{@{}l@{}}$\qquad 1, \; n<0 \qquad $\\ $\qquad 0, \; n \geq 0$\end{tabular}  & \begin{tabular}[c]{@{}l@{}}$\tau(K)+1, \; n<2\tau(K)$\\ $\tau(K), \qquad n \geq 2\tau(K)$\end{tabular} &  \\
&&& \\
$\varepsilon(P_{n,0}(K))$ & $1$                 & \begin{tabular}[c]{@{}l@{}}$0, \; n =0$\\ $1, \; n \neq 0$\end{tabular} & $1$  
                                                                                        &  \\
&&& \\
$\nu(P_{n,0}(K))$         & $\tau(K)+1$         & \begin{tabular}[c]{@{}l@{}}$1, \; n<0$\\ $0, \; n \geq 0$\end{tabular}  & \begin{tabular}[c]{@{}l@{}}$\tau(K)+1, \; n<2\tau(K)$\\ $\tau(K), \qquad n \geq 2\tau(K)$\end{tabular} & 
\end{tabular}
\caption{Summary of \cite[Theorem 1.6, Theorem 1.7, and Section 5]{Bodish}.}\label{tab:Bodish}
\end{table}

We can also get bounds on $\tau(P_{n,m}(K))$, as in the following lemma:

\begin{lemma}\label{lem:Pnm}
Let $P_{n,m}$ be the pattern in the left of Figure \ref{fig:YasuiPQ}. 
\begin{enumerate}
	\item If $\varep(K)=0$, $n<0$, and $m\geq 0$, then 
\[ \tau(P_{n,m}(K)) \geq 1. \]
	\item If $\varep(K)=1$, $n < 2\tau(K)$, and $m\geq 0$, then 
\[ \tau(P_{n,m}(K)) \geq \tau(K)+1. \]
\end{enumerate}

\end{lemma}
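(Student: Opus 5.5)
We reduce to the case $m=0$, where Table~\ref{tab:Bodish} gives exactly the claimed bounds: if $\varep(K)=0$ and $n<0$ then $\tau(P_{n,0}(K))=1$, and if $\varep(K)=1$ and $n<2\tau(K)$ then $\tau(P_{n,0}(K))=\tau(K)+1$. So it suffices to show that, with $n$ and $K$ fixed, $\tau(P_{n,m}(K))\ge \tau(P_{n,0}(K))$ for every $m\ge 0$.

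The pattern $P_{n,m}$ differs from $P_{n,0}$ only inside the box labelled $-m$. That box contains $-m$ full twists on two parallel strands of the pattern. In $P_{n,m}(K)$ these strands are two oppositely oriented arcs of the knot, since they are the two strands of the clasp-like band in Figure~\ref{fig:YasuiPQ}. We will read off the orientation from the figure and check this.

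For oppositely oriented strands, a full twist can be undone by a crossing change, and crossing changes move $\tau$ by at most one in a controlled direction (\cite{OS-4ball}): changing a positive crossing to a negative one satisfies $\tau(K_-)\le\tau(K_+)\le\tau(K_-)+1$. Concretely, a $-1$ full twist on two antiparallel strands is obtained from the untwisted band by changing one crossing in a clasp. So we will realize $P_{n,m}(K)$ from $P_{n,m-1}(K)$ by converting a negative crossing into a positive one, or equivalently pass from $P_{n,m}(K)$ to $P_{n,m-1}(K)$ by a positive-to-negative crossing change. The inequality then gives $\tau(P_{n,m}(K))\ge\tau(P_{n,m-1}(K))$. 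By induction on $m$ we get $\tau(P_{n,m}(K))\ge\tau(P_{n,0}(K))$, which combined with Table~\ref{tab:Bodish} proves both parts.

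An alternative route, if the crossing-change bookkeeping is awkward, is to exhibit a genus-zero cobordism built only from positive-to-negative crossing changes, for instance by adding a band-plus-twist. Another option is to use that $\tau$ is nondecreasing under adding positive full twists to antiparallel strands; this holds because such a twist is $-1$ surgery on an unknot bounding a disk meeting the knot algebraically zero times, which gives a negative-definite cobordism argument.

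The main obstacle is the sign and orientation check. We must confirm that the two strands through the $-m$ box are antiparallel, so that the full twist is realized by a single crossing change rather than many. We must also confirm that, with the figure's conventions for the sign of the twist box, decreasing $m$ corresponds to a positive-to-negative change and not the reverse. If the sign turns out reversed, the induction would give the wrong inequality, and we would instead compare with the mirror or use the twisting/negative-definite argument above.
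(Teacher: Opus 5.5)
Your proposal is correct and is essentially the paper's proof: the case $m=0$ comes from Bodish's computations in Table~\ref{tab:Bodish}, and the crossing change inequality for $\tau$ gives $\tau(P_{n,m+1}(K))\geq\tau(P_{n,m}(K))$. The check you defer does go through: tracing the figure, the two strands in the $-m$ box are antiparallel, so with the standard convention its $m$ left-handed full twists are $2m$ positive crossings, and removing one twist is a single positive-to-negative crossing change (one crossing change undoes a two-strand full twist for either orientation; the orientation only determines the sign).
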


\begin{proof}
Recall from \cite[Corollary 3]{Livingston-tau} that $\tau$ satisfies a crossing change inequality; namely,  if knots $J_+$ differ $J_-$ by a single crossing change, from positive to negative, then $\tau(J_+)$ is either $\tau(J_-)$ or $\tau(J_-)+1$. Hence $\tau(P_{n,m+1}(K)) \geq \tau(P_{n,m}(K))$. Table \ref{tab:Bodish} tells us that 
\begin{enumerate}
	\item $\tau(P_{n,0}(K)) = 1$ if $\varep(K)=0$ and  $n<0$, and
	\item $\tau(P_{n,0}(K)) = \tau(K)+1$ if $\varep(K)=1$ and  $n<2\tau(K)$. 
\end{enumerate}
This completes the proof.
\end{proof}

We are also interested in $\tau, \varep$, and $\nu$ of $Q_{n,m}(K)$, where $Q_{n,m}$ is the pattern in the right of Figure \ref{fig:YasuiPQ}. Since $Q_{n,m}$ is concordant in $S^1 \times D^{{2}} \times I$ to the identity pattern and $\tau, \nu$, and $\varep$ are all concordance invariants, we have that

\begin{align}\label{eq:tenQ}
\begin{split}
	\tau(Q_{n,m}(K)) &= \tau(K) \\
	\varep(Q_{n,m}(K)) &= \varep(K) \\
	\nu(Q_{n,m}(K)) &= \nu(K). \\
\end{split}
\end{align}
Also recall that $\nu(K) = \tau(K)$ if $\varep(K)=0$ or $1$, and $\nu(K) = \tau(K)+1$ if $\varep(K)=-1$.

We are now ready to prove Theorem \ref{thm:exotic traces}.

\begin{proof}[Proof of Theorem \ref{thm:exotic traces}]
We first consider the case when $\varep(K)=0$, $n<0$, and $m \geq 0$. Then by Lemma \ref{lem:Pnm}, we have that $\tau(P_{n,m}(K)) \geq 1$, which implies that $|\varep(P_{n,m}(K))| = 1$, while \eqref{eq:tenQ} tells us that  $\varep(Q_{n,m}(K))=0$. Since the absolute value of $\varep$ is a trace invariant, it follows that $X_n(P_{n,m}(K))$ and $X_n(Q_{n,m}(K))$ are not diffeomorphic. 

We next consider the case when $\varep(K)=0$ and $n \neq 0$, in which case Table \ref{tab:Bodish} tells us that $\varep(P_{n,0}(K)) = 1$. Again, by \eqref{eq:tenQ}, we have that $\varep(Q_{n,0}(K))=0$, and so it follows that $X_n(P_{n,0}(K))$ and $X_n(Q_{n,0}(K))$ are not diffeomorphic. 

We lastly consider the case when $\varep(K)=1$ and $n<2\tau(K)$, in which case Lemma \ref{lem:Pnm} tells us that for $m \geq 0$, we have $\nu(P_{n,m}(K)) \geq \tau(K)+1$. We have that $\nu(Q_{n,m}(K))=\nu(K)=\tau(K)$, where the first equality follows from \eqref{eq:tenQ} and the second equality follows from the assumption that $\varep(K)=1$. Since $\nu$ is a trace invariant, it follows that $X_n(P_{n,m}(K))$ and $X_n(Q_{n,m}(K))$ are not diffeomorphic.
\end{proof}

\section{Relationship with Baldwin-Sivek's $\widehat{\nu}$}\label{sec:BS}

In this section, we first show that $\widehat{\nu}=|\varep|(2\nu -1)$ agrees with the invariant $\widehat{\nu}$ introduced by Baldwin and Sivek in \cite{BS-instantonconcordance}. To do that, according to \cite[Lemma 10.4]{BS-instantonconcordance}, it suffices to prove the following lemma. 

\begin{lemma}\label{lem:same nu hat}
Let $K$ be a knot in $S^3$. Then
\begin{equation}\label{eq:same nu hat}
  |\varepsilon(K)|(2\nu(K)-1)=\begin{cases}
    \mathrm{max}( 2\nu(K)-1,0), & \nu(K)\geq \nu({-K}).\\
    -\mathrm{max}( 2\nu({-K})-1,0), & \nu(K)\leq \nu({-K}),
  \end{cases}
\end{equation}
where $-K$ is the reverse of the mirror of $K$. In particular, $\widehat{\nu}=|\varep|(2\nu -1)$ agrees with the invariant $\widehat{\nu}$ introduced in \cite{BS-instantonconcordance}.
\end{lemma}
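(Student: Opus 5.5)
The plan is a case analysis on $\varep(K)\in\{-1,0,1\}$, using only the relations among $\tau$, $\nu$, $\varep$ recalled in Section \ref{sec:mapcone}: $\tau(-K)=-\tau(K)$, $\varep(-K)=-\varep(K)$, $\nu=\tau$ or $\tau+1$, and $\nu(K)=\tau(K)+1$ exactly when $\varep(K)=-1$ (otherwise $\nu(K)=\tau(K)$). Applying the last fact to $-K$ gives $\nu(-K)=-\tau(K)+1$ if $\varep(K)=1$, and $\nu(-K)=-\tau(K)$ otherwise. I will also use the fact from \cite{Hom-cables} that $\varep(K)=0$ forces $\tau(K)=0$. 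In each case I will compute both sides of \eqref{eq:same nu hat} directly.

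\emph{Case $\varep(K)=0$.} Here $\tau(K)=0$, so $\nu(K)=\nu(-K)=0$. Both lines of \eqref{eq:same nu hat} apply, and each gives $\max(-1,0)=0$. The left side is also $0$, so the two sides agree.

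\emph{Case $\varep(K)=1$.} Write $t=\tau(K)$. Then $\nu(K)=t$ and $\nu(-K)=1-t$, and the left side equals $2t-1$.
\begin{itemize}
\item If $t\ge 1$, then $\nu(K)\ge\nu(-K)$, so the right side is $\max(2t-1,0)=2t-1$.
\item If $t\le 0$, then $\nu(K)<\nu(-K)$, so the right side is $-\max(1-2t,0)=2t-1$, since $1-2t\ge 1$.
\end{itemize}
Because $t$ is an integer, the two inequalities never hold simultaneously, so there is no ambiguity about which line applies.

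\emph{Case $\varep(K)=-1$.} This follows from the previous case by symmetry. Apply that case to $-K$, which has $\varep(-K)=1$. Since $-(-K)=K$, swapping $K$ and $-K$ interchanges the two lines of \eqref{eq:same nu hat} and negates the right side. For the left side, set $t=\tau(K)$, so $\nu(K)=t+1$ and $\nu(-K)=-t$. Then
\[
2\nu(K)-1=2t+1=-(2\nu(-K)-1),
\]
so the left side is also negated. As a check, one can also compute directly: if $t\ge 0$, then $\nu(K)>\nu(-K)$ and the right side is $\max(2t+1,0)=2t+1$; if $t\le -1$, the right side is $-\max(-2t-1,0)=2t+1$.

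\emph{Where the difficulty lies.} The only real care needed is at the boundary. When $\nu(K)=\nu(-K)$ both lines of \eqref{eq:same nu hat} apply, and I must confirm they agree. In the $|\varep|=1$ cases parity rules this out, since $\nu(K)+\nu(-K)=1$. In the $\varep=0$ case both lines give $0$. The final sentence of the lemma then follows from \cite[Lemma 10.4]{BS-instantonconcordance}, which characterizes their $\widehat{\nu}$ by the right-hand side of \eqref{eq:same nu hat}.
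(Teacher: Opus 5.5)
Your proof is correct and follows essentially the same route as the paper: a case split on $\varep(K)$, where $\varep=0$ gives $\nu(K)=\nu(-K)=0$ and makes both sides vanish, and $|\varep|=1$ is governed by $\nu(K)+\nu(-K)=1$, which also rules out the boundary case $\nu(K)=\nu(-K)$. The only difference is cosmetic: the paper treats $\varep=\pm1$ together through that identity, whereas you parametrize by $\tau(K)$ and reduce $\varep=-1$ to $\varep=1$ via the $K\leftrightarrow -K$ symmetry.
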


\begin{proof}
    First recall that the standard relations between $\varepsilon$, $\nu$, $\tau$, and the behavior of these invariants under mirroring give us 
\begin{enumerate}
	\item\label{it:epnu0} $\varepsilon(K)=0$ if and only if $\nu(K)=\nu({-K})=0$, and 
	\item\label{it:epnuneq0} if $\varepsilon(K)\neq 0$, then $\nu(K)+\nu({-K})=1$.
\end{enumerate}

    Now we divide the proof into two cases. First, if $\varepsilon(K)=0$, then by property \eqref{it:epnu0} above, both sides of \eqref{eq:same nu hat} are zero, so they agree.
    
    Second, if $|\varepsilon(K)|=1$, then property \eqref{it:epnuneq0} above says $\nu(K)+\nu({-K})=1$ and the left-hand side of \eqref{eq:same nu hat}  equals $2\nu(K)-1$. Let's verify the right-hand side also gives $2\nu(K)-1$. If $\nu(K) \geq \nu({-K})$, then $\nu(K) \geq 1- \nu(K)$, we have $2\nu(K)-1 \geq 0$. Therefore the right-hand side gives $\widehat{\nu}=2\nu-1$. If $\nu(K) \leq \nu({-K})$, then $1-\nu({-K}) \leq \nu({-K})$, we have $2\nu({-K})-1 \geq 0$. Therefore the right-hand side  gives $\widehat{\nu}=-(2\nu({-K})-1)=1-2(1-\nu(K))=2\nu(K)-1$. This finishes the proof.
\end{proof}

The next proposition summarizes how $\widehat{\nu}(K)$ relates to the rank of $\widehat{HF}(S^3_{p/q}(K))$. 

\begin{proposition} \label{prop:dimHF}\cite[Proposition 4.2]{BS-Lspaceknottrace} For any knot $K$ in $S^3$, there is an integer $\widehat{r}_0(K)$ such that \[\operatorname{dim} \widehat{HF}(S^3_{p/q}(K)=q \cdot \widehat{r}_0(K)+|p-q\widehat{\nu}(K)|\] for all nonzero, relatively prime integers $p\neq 0$ and $q>0$.
\end{proposition}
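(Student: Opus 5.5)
One route is to compute $\operatorname{dim}\HFhat(S^3_{p/q}(K))$ directly from the rational surgery mapping cone of \cite{OS-rational}, in the same hat-flavored, pass-to-homology style used in the proof of Proposition \ref{prop:nu01}. For $p/q$ with $q>0$, the cone $\bbX_{p/q}$ has domain $\bigoplus_{(i,s)} A_{\lfloor (i+ps)/q\rfloor}$ and target $\bigoplus_{(i,s)} B$, indexed by $i\in\Z/p$ (or $\Z$ when $p=0$, excluded here) and $s\in\Z$. Its vertical and horizontal maps are $v_{\lfloor (i+ps)/q\rfloor}$ into the $(i,s)$ copy of $B$ and $h_{\lfloor (i+ps)/q\rfloor}$ into the $(i+p,s)$ copy; when we reindex $i+p$ for $(i,s)$ with $0\le i<p$, this copy is $(i,s+1)$. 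Since each $H_*(B)=\F$ and the maps $v_{t,*},h_{t,*}$ are either zero or surjective, the cone's homology has dimension
\[
\sum \operatorname{dim} H_*(A_t) \;+\; \#\{B\text{'s}\} \;-\; 2\operatorname{rank} D_*,
\]
after truncating to a finite quotient complex, exactly as in \cite{OS-rational}. For $t$ large positive or negative, $A_t\to B$ is a quasi-isomorphism, and these terms cancel in pairs.

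First, I would put $a_t=\operatorname{dim}H_*(A_t)$. Each integer $t$ occurs exactly $q$ times as $\lfloor (i+ps)/q\rfloor$ as $(i,s)$ ranges over the index set, so the $A$-part contributes $q\sum_t (a_t-1)$ beyond the canceling stable range. This is where $\widehat r_0(K)$ comes from: it should equal $\sum_t (a_t-1)$ plus a constant correction coming from the $\nu$/$\varep$ analysis in the next step. Second, I would compute the rank of $D_*$ chain by chain. For each residue $i$, the maps form a zigzag: $v_{t,*}$ is onto iff $t\ge \nu(K)$, and $h_{t,*}$ is onto iff $t\le -\nu(K)$. The copies of $B$ not hit by $D_*$ are the ones with $t\in[\,-\nu+1\ldots \nu-1\,]$ style windows, as in the proofs of Proposition \ref{prop:nu01} and Proposition \ref{prop:nuhatneq0}. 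Counting unhit $B$'s minus unused $A$-directions over all $i$ should give $\max(0,\, q(2\nu-1)-p)$-type corrections. Combined with the $p$ "base" generators (one per $i$), this produces $|p-q(2\nu(K)-1)|$ when $\varep(K)\neq 0$. The case split is on the sign of $p-q\nuhat(K)$ and, as in Lemma \ref{lem:same nu hat}, on whether $\nu(K)\ge\nu(-K)$. The latter case can be reduced to the former by mirroring, since $S^3_{p/q}(K)=-S^3_{-p/q}(-K)$ and $\nuhat(-K)=-\nuhat(K)$.

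Third, I would handle $\varep(K)=0$, so $\nu=0$ and $\nuhat=0$. Here both $v_{0,*}$ and $h_{0,*}$ are onto, and by Lemma \ref{lem:HMP} they induce the same map. Each $A_0$ summand then contributes rank $1$ rather than $2$ to $D_*$, which converts the would-be answer $|p+q|$ into $|p|$ after an adjustment of $\widehat r_0$ by a constant independent of $p,q$. I expect the main obstacle to be this bookkeeping: showing that the correction terms are \emph{linear in $q$ with $p$-independent slope} outside the absolute value, uniformly across all cases, so that a single integer $\widehat r_0(K)$ works for every $p/q$. I would check it first on the $L$-space/torus-knot case, where the formula is known, and then verify that the local contributions near $t\in\{-\nu,\ldots,\nu\}$ match. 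As a fallback, I would cite Hanselman's immersed-curve pairing \cite[Proposition 15]{HANSELMANHF}. There the nonessential components each meet a line of slope $p/q$ in $q$ times a fixed number of points, and the essential component, pulled tight, has slope determined by $\nuhat$ and meets it in $|p-q\nuhat|$ points.
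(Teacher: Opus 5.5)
Your plan is sound, and it takes a genuinely different route from the paper. The paper gives no argument here: it imports the statement from \cite{BS-Lspaceknottrace}. There the result ultimately rests on Ozsv\'ath--Szab\'o's rank formula $p+2\max(0,q(2\nu-1)-p)+qR(K)$ for $p,q>0$ from \cite{OS-rational}, where $R(K)=\sum_t(\operatorname{dim}H_*(A_t)-1)$, or equivalently on Hanselman's immersed curves \cite{HANSELMANHF}, rewritten in terms of $\nuhat$ as in Lemma \ref{lem:same nu hat}.

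Redoing the count in the rational cone, as you propose, buys two things. First, an explicit value $\widehat{r}_0(K)=R(K)+\nuhat(K)$. Second, and more importantly, a treatment of $p<0$ inside the same cone, so that a single integer visibly works for both signs. Your mirroring step is only legitimate because of this second point. Your setup ($0\le i<p$, ``$p$ base generators'') is written for $p>0$. If you treated only $p>0$ and then mirrored, you would still owe the symmetry $\widehat{r}_0(K)=\widehat{r}_0(-K)$, i.e.\ $R(-K)-R(K)=2\nuhat(K)$, which does not follow from the $p>0$ computation alone. The citation is of course much shorter.

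The bookkeeping you defer does close, and uniformly. After cancelling the stable range, each chain has one more $A$ than $B$ if $p>0$, and one more $B$ than $A$ if $p<0$. Hence $\operatorname{dim}H_*(\bbX_{p/q})=qR(K)+p+2\operatorname{dim}\operatorname{coker}D_*$, with $p$ signed.

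If $\nu(K)\ge 1$ (exactly your case $\varep\neq 0$, $\nu(K)\ge\nu(-K)$), no $A_t$ has both maps onto, so the cokernel is spanned by the unhit $B$'s. The $B$ at position $j=i+ps$ is unhit iff $q(1-\nu)+p\le j\le q\nu-1$. That gives $\max(0,q\nuhat-p)$ unhit $B$'s for either sign of $p$, and $p+2\max(0,q\nuhat-p)=q\nuhat+|p-q\nuhat|$.

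If $\varep(K)=0$, the effect of $v_{0,*}=h_{0,*}$ is not a loss of one unit of rank per $A_0$ summand. For $p>0$, $D_*$ is still onto. For $p<0$, every one of the $|p|$ chains has exactly one-dimensional cokernel: either the $B$'s adjacent to its block of $A_0$'s see only the relations $y_k+y_{k+1}$, or, if the chain contains no $A_0$, a single $B$ is missed. This gives $qR(K)+|p|$.

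Finally, your fallback description is inaccurate. The essential component does not meet the line in $|p-q\nuhat|$ points. For $T_{2,3}$ and $p/q=1$ that would be $0$, yet the curve is a single essential component and $\operatorname{dim}\HFhat(S^3_1(T_{2,3}))=1$. The essential component contributes $|p-q\nuhat|$ plus a multiple of $q$, and that multiple is absorbed into $\widehat{r}_0$.
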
 

As a quick consequence of Corollary \ref{cor:nuhat}
and the proposition above, we see that both $\widehat{r}_0(K)$ and $\operatorname{dim}\widehat{HF}(S^3_{p/q}(K))$ are knot trace invariants. To be more precise, we have the following corollary that extends \cite[Theorem 4.6]{BS-Lspaceknottrace} and implies Theorem \ref{thm:dimHF}.

\begin{corollary}
    For any integer $n$, if the oriented knot traces $X_n(K)$ and $X_n(K')$ are diffeomorphic, then $\widehat{r}_0(K)=\widehat{r}_0(K')$. Moreover, $\operatorname{dim} \widehat{HF}(S^3_{p/q}(K)=\operatorname{dim} \widehat{HF}(S^3_{p/q}(K')$ for any nonzero, relatively prime integers $p\neq0$ and $q>0$.
\end{corollary}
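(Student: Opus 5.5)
The plan is to combine Corollary \ref{cor:nuhat} with Proposition \ref{prop:dimHF}. Suppose $X_n(K)\cong X_n(K')$. By Corollary \ref{cor:nuhat}, $\widehat{\nu}(K)=\widehat{\nu}(K')$; call this common value $\widehat{\nu}$. The only remaining point is to show that $\widehat{r}_0(K)=\widehat{r}_0(K')$. The formula in Proposition \ref{prop:dimHF} then gives equality of $\operatorname{dim}\HFhat(S^3_{p/q})$ for every admissible $p/q$.

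To compare $\widehat{r}_0$, I will use the integer surgery $S^3_n(K)$. This is the oriented boundary of $X_n(K)$, so an orientation-preserving diffeomorphism $X_n(K)\cong X_n(K')$ restricts to an orientation-preserving diffeomorphism $S^3_n(K)\cong S^3_n(K')$. Hence $\operatorname{dim}\HFhat(S^3_n(K))=\operatorname{dim}\HFhat(S^3_n(K'))$.

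If $n\neq 0$, apply Proposition \ref{prop:dimHF} with $p=n$ and $q=1$. This gives
\[\widehat{r}_0(K)+|n-\widehat{\nu}|=\widehat{r}_0(K')+|n-\widehat{\nu}|,\]
and therefore $\widehat{r}_0(K)=\widehat{r}_0(K')$.

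The main obstacle is the case $n=0$, where Proposition \ref{prop:dimHF} does not apply directly because it requires $p\neq 0$. My first attempt is to get a second $3$-manifold from $X_0$. Here $S^3_0(K)$ is still determined, and I would try to relate $\operatorname{dim}\HFhat(S^3_0(K))$ to $\widehat{r}_0(K)$ and $\widehat{\nu}(K)$. One route is the immersed curve description of Hanselman \cite{HANSELMANHF}: the dimension for slope $0$ should equal $\widehat{r}_0+|\widehat{\nu}|$ plus a correction, namely $+2$ or $0$ coming from the pairing with the horizontal line, depending on whether $\widehat{\nu}=0$. Since $\widehat{\nu}$ is already known to agree for $K$ and $K'$, this formula would again determine $\widehat{r}_0$.

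If that extension is awkward to justify, the fallback is \cite[Theorem 4.6]{BS-Lspaceknottrace}. It presumably already handles $\widehat{r}_0$ for $0$-traces, or at least shows that $\dim\HFhat(S^3_0)$ is determined by $\widehat{r}_0$ and $\widehat{\nu}$ through the exact triangle relating $S^3$, $S^3_0(K)$ and $S^3_1(K)$.

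Once $\widehat{r}_0$ and $\widehat{\nu}$ agree, the second statement follows immediately by substituting into Proposition \ref{prop:dimHF}.
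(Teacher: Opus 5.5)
For $n\neq 0$ your argument is exactly the paper's. You restrict the diffeomorphism to the boundary, invoke Corollary~\ref{cor:nuhat}, and cancel $|n-\nuhat|$ in Proposition~\ref{prop:dimHF} with $p=n$, $q=1$. The paper's written proof does not treat $n=0$ separately: it simply applies Proposition~\ref{prop:dimHF} with $p=n$, which that proposition does not allow when $n=0$. So you are right that this case needs more, but what you offer for it is not yet a proof.

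Your immersed-curve formula is only asserted (``should equal''). Your exact-triangle fallback cannot work from dimensions alone. The triangle
\[
\HFhat(S^3)\xrightarrow{\;F_{X_0}\;}\HFhat(S^3_0(K))\longrightarrow\HFhat(S^3_1(K))\longrightarrow\HFhat(S^3)
\]
gives $\dim\HFhat(S^3_0(K))=\dim\HFhat(S^3_1(K))-1+2\operatorname{rank}F_{X_0}$, where $F_{X_0}$ is the map summed over all $\spinc$ structures. The missing ingredient is therefore $\operatorname{rank}F_{X_0}$.

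That ingredient is available in two ways.

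\emph{First route.} $\operatorname{rank}F_{X_0}$ is itself an invariant of the oriented diffeomorphism type of $X_0$: isotope the diffeomorphism to preserve a $4$-ball and use naturality of cobordism maps. Combined with $S^3_0(K)\cong S^3_0(K')$, the triangle gives $\dim\HFhat(S^3_1(K))=\dim\HFhat(S^3_1(K'))$. Proposition~\ref{prop:dimHF} with $p=q=1$, together with $\nuhat(K)=\nuhat(K')$, then yields $\widehat r_0(K)=\widehat r_0(K')$.

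\emph{Second route.} Theorem~\ref{thm:vanishing HF when summing} at $n=0$ says $F_{X_0}\neq 0$ exactly when $\nuhat(K)\geq 0$. Plugging this and Proposition~\ref{prop:dimHF} (at $p=q=1$) into the triangle gives $\dim\HFhat(S^3_0(K))=\widehat r_0(K)+|\nuhat(K)|$ if $\nuhat(K)\neq 0$, and $\widehat r_0(K)+2$ if $\nuhat(K)=0$. This is precisely the slope-$0$ formula you guessed, and now $\widehat r_0$ is recovered from the $0$-trace as you intended.

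With either fix, your proof covers every $n$, including the case the paper's argument passes over.
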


\begin{proof}
   If $X_n(K) \cong X_n(K')$, then $\operatorname{dim} \widehat{HF}(S^3_{n}(K))=\operatorname{dim} \widehat{HF}(S^3_{n}(K'))$. Now, the corollary follows from $\widehat{\nu}$ being a knot trace invariant (Corollary \ref{cor:nuhat}) and Proposition \ref{prop:dimHF}.
\end{proof}

\section{Obstructing symplectic fillings}\label{sec:symplecticfillings}

Next we move on to the obstruction of a symplectic structure on knot traces. To prove Theorems \ref{thm:filling for traces} and \ref{thm:filling for 2 handles}, the main tools we use here are the Heegaard Floer contact invariant \cite{HKMcontact} \cite{OScontact} and its naturality properties under strong symplectic fillings \cite{GHIGGINI fillable}. 

We start with the following useful proposition which is essentially a consequence of \cite[Remark 2.14]{GHIGGINI fillable} together with some formal properties of Heegaard Floer homology.

\begin{proposition}\label{prop:non-vanishing map for filling}
    Let $(Y,\xi)$ be a contact $3$-manifold. If the trace $(X,\omega)$ is a strong symplectic filling of $(Y,\xi)$, then the Heegaard Floer map induced by the cobordism \[
F_{{X},\mathfrak{s}} : \widehat{HF}(S^3) \longrightarrow \widehat{HF}(Y)
\] 
is non-vanishing for some $\spinc$ structure $\s$ on $X$.
\end{proposition}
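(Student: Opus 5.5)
The plan is to use the Heegaard Floer contact invariant and its naturality under strong fillings, following \cite[Remark 2.14]{GHIGGINI fillable}. Puncture $X$ at an interior point, remove a small Darboux ball, and regard $W=X\setminus B^4$ as a cobordism from $S^3$ to $Y$. The concave end carries the standard tight contact structure $\xi_{\std}$ as the boundary of the Darboux ball, and the convex end is $(Y,\xi)$. Under these identifications the cobordism maps $F_{X,\s}$ in the statement are exactly the maps $F_{W,\s}$, where $\spinc$ structures on $W$ and on $X$ correspond. Throughout I work with the orientation reversed, $\overline{W}\colon -Y\to -S^3$, since contact invariants live in $\HFhat(-Y)$.

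The key input is Ghiggini's naturality for strong fillings. Let $\s_\omega$ be the canonical $\spinc$ structure of $\omega$. Then
\[ F_{\overline{W},\s_\omega}(c(\xi)) = c(\xi_{\std}), \]
and $c(\xi_{\std})$ generates $\HFhat(-S^3)=\F$. Strictly, the statement is that the map on $HF^+$, restricted to the single $\spinc$ structure $\s_\omega$, sends $c^+(\xi)$ to $c^+(\xi_{\std})$. Since $c^+(\xi_{\std})$ lies in the image of $\HFhat\to HF^+$ and the maps are natural under this map, I would pass to the hat flavor. This identification between flavors is the step I expect to be the main obstacle: one has to check that the remark really gives the identity in $\HFhat$ for the single $\spinc$ structure $\s_\omega$, and not only after summing, and that the reversal of orientation and the choice of hat versus plus flavor are handled correctly. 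If the plus version is all that is available, I would run the whole argument with $HF^+$ and then deduce the hat statement via the long exact sequence relating $\widehat{HF}$ and $HF^+$, using that $HF^+(S^3)\to HF^+(S^3)$ is modeled on $\F[U^{-1}]$.

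From this identity, $F_{\overline{W},\s_\omega}\colon \HFhat(-Y)\to\HFhat(-S^3)$ is a nonzero map. Now apply duality: the map induced by $W\colon S^3\to Y$ with $\s$ is dual, under the natural pairing $\HFhat(-M)\cong \HFhat(M)^*$, to the map induced by the turned-around cobordism $\overline{W}\colon -Y\to -S^3$ with the same $\spinc$ structure. Over $\F$ a linear map is nonzero exactly when its dual is nonzero. Hence $F_{X,\s_\omega}\colon\HFhat(S^3)\to\HFhat(Y)$ is nonzero, which proves the proposition with $\s=\s_\omega$.
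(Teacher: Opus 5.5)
Your proposal is correct and follows the paper's proof. You use the same ingredients: Ghiggini's naturality $F^+_{\overline{X},\s_\omega}(c^+(\xi))=c^+(\xi_{\std})$, passage to the hat flavor via the commutative square with the natural maps $\HFhat\to HF^+$, and the duality of \cite[Section 5.1]{OS4manifold}. On the step you flagged as the main obstacle: commutativity alone already gives $F_{\overline{X},\s_\omega}(c(\xi))\neq 0$, and injectivity of $\HFhat(-S^3)\to HF^+(-S^3)$ upgrades this to your stated equality. The only differences are in ordering (the paper dualizes first and then works with $\overline{X}$, whereas you dualize at the end) and in how explicitly the hat-to-plus step is written out.
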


\begin{proof}
    Based on the duality of the cobordism maps between $X_n$ and its upside-down $\overline{X}$ \cite[Section 5.1]{OS4manifold}, it is enough to show the induced map \[F_{\overline{X},\mathfrak{s}} : \widehat{HF}(-Y) \longrightarrow \widehat{HF}(-S^3)
\]
is non-vanishing for some $\spinc$ structure $\s$ on $\overline{X}$. This is easier and more natural to deal with, since it is compatible with the contact invariants.

Suppose $(X,\omega)$ is a strong symplectic filling for $(Y,\xi)$. According to \cite[Proof of Theorem 2.13 and Remark 2.14]{GHIGGINI fillable}, the induced map  on $HF^+$ \[F^+_{\overline{X_n},\mathfrak{s}} \colon HF^+(-Y) \longrightarrow HF^+(-S^3)
\]
maps the plus contact invariant $c^+(\xi)$ to $c^+(\xi_{\std})$, where $\mathfrak{s}$ is the canonical $\spinc$ associated to the symplectic structure, and $\xi_{\std}$ is the standard tight contact structure on $S^3$. In particular, the natural exact triangle that relates the hat and plus versions of Heegaard Floer give us the following commutative diagram: 
\[\begin{tikzcd}
	{\widehat{HF}(-Y) } && {\widehat{HF}(-S^3)} \\
	\\
	{HF^+(-Y) } && {HF^+(-S^3)}
	\arrow["{F_{\overline{X},\mathfrak{s}} }", from=1-1, to=1-3]
	\arrow["{i_{-Y}}"', from=1-1, to=3-1]
	\arrow["i", from=1-3, to=3-3]
	\arrow["{F^+_{\overline{X},\mathfrak{s}}}"', from=3-1, to=3-3]
\end{tikzcd}\]

By the definition of the plus contact invariant, $c^+$ is exactly the image of the hat contact invariant $c$ under the natural inclusion map. Thus, if we start with $c(\xi)\in {\widehat{HF}(-Y) }$ we see $F^+_{\overline{X},\mathfrak{s}}(i_{-Y}(c(\xi)))=F^+_{\overline{X},\mathfrak{s}}(c^+(\xi))=c^+(\xi_{\std})$. In particular, this is a non-trivial map since $c^+(\xi_{\std})$ is non-trivial. The commutativity of the diagram together with the fact that $i$ map from $\widehat{HF}(-S^3)$ to $\widehat{HF}(-S^3)$ is injective imply the top horizontal map $F_{\overline{X},\mathfrak{s}}$ is non-vanishing for some $\spinc$ structure on the cobordism, which finishes the proof.
\end{proof}

Now both Theorem \ref{thm:filling for traces} and \ref{thm:filling for 2 handles} follow easily from the above proposition. 

\begin{proof}[Proof of Theorem \ref{thm:filling for traces}]

Let $X_n$ be a strong symplectic filling of $(S^3_n(K),\xi)$. If $\widehat{\nu}\neq0$, then Theorem \ref{thm:vanishing HF for all spinc} and Proposition \ref{prop:non-vanishing map for filling} imply for any $n\geq \widehat{\nu}(K)= \widehat{\nu}(K)+0^{\widehat{\nu}(K)}$, $X_n$ is not a strong filling. If $\widehat{\nu}=0$, then Theorem \ref{thm:vanishing HF for all spinc} and Proposition \ref{prop:non-vanishing map for filling} imply for any $n\geq 1= \widehat{\nu}(K)+0^{\widehat{\nu}(K)}$, $X_n$ is not a strong filling. This completes the proof.
\end{proof}

\begin{proof}[Proof of Theorem \ref{thm:filling for 2 handles}]

Let $X$ be the corresponding $2$-handle cobordism of $X_{\mathbf{n}}({\mathbf{L}})$ from $S^3$ to $S^3_{\mathbf{n}}(\mathbf{L})$. We start by building $X(\mathbf{L})$ using the $L_i$ handle first. We then have two cobordisms $X_1$ from $S^3$ to $S^3_{n_i}(L_i)$ and $X_2$ from $S^3_{n_i}(L_i)$ to $S^3_{\mathbf{n}}(\mathbf{L})$, where their composite cobordism $X_2 \circ X_1=X$. 

Let $\mathfrak{s}$ be any $\spinc$ structure on $X$. The condition $n_i\neq 0$ implies that $S^3_{n_i}(L_i)$ is a rational homology sphere. Thus, if $\mathfrak{s'}$ is a $\spinc$ structure on $X$ with $\mathfrak{s'}|_{X_1}=\mathfrak{s}|_{X_1}$ and $\mathfrak{s'}|_{X_2}=\mathfrak{s}|_{X_2}$ then we have $\mathfrak{s'}=\mathfrak{s}$. Hence, the composition formula in Heegaard Floer \cite[Theorem 3.4]{OS4manifold} tells us $F_{X_2,\mathfrak{s}|_{X_2}} \circ F_{X_1,\mathfrak{s}|_{X_1}}=F_{X,\mathfrak{s}|_{X}}$.

Now, by Theorem \ref{thm:vanishing HF for all spinc} the condition $n_i \geq \widehat{\nu}(L_i)+0^{\widehat{\nu}(L_i)}$ implies that $F_{X_1,\mathfrak{s}|_{X_1}}$ is always a trivial map, so $F_{X,\mathfrak{s}|_{X}}$ is also always a trivial map for any $\mathfrak{s}$.  Hence by Proposition \ref{prop:non-vanishing map for filling}, $X_n(\mathbf{L})$ cannot be a strong filling.
\end{proof}
\bibliographystyle{alpha}
\bibliography{bib}

\begin{bibdiv}
\begin{biblist}

\bib{Akbulut-exotic}{article}{
      author={Akbulut, Selman},
       title={An exotic {$4$}-manifold},
        date={1991},
        ISSN={0022-040X,1945-743X},
     journal={J. Differential Geom.},
      volume={33},
      number={2},
       pages={357\ndash 361},
         url={http://projecteuclid.org/euclid.jdg/1214446321},
      review={\MR{1094460}},
}

\bib{Akbulut-fake}{article}{
      author={Akbulut, Selman},
       title={A fake compact contractible {$4$}-manifold},
        date={1991},
        ISSN={0022-040X,1945-743X},
     journal={J. Differential Geom.},
      volume={33},
      number={2},
       pages={335\ndash 356},
         url={http://projecteuclid.org/euclid.jdg/1214446320},
      review={\MR{1094459}},
}

\bib{AkbulutLatveyev-exotic}{article}{
      author={Akbulut, Selman},
      author={Matveyev, Rostislav},
       title={Exotic structures and adjunction inequality},
        date={1997},
        ISSN={1300-0098,1303-6149},
     journal={Turkish J. Math.},
      volume={21},
      number={1},
       pages={47\ndash 53},
      review={\MR{1456158}},
}

\bib{Bodish}{misc}{
      author={Bodish, Holt},
       title={Genus, fiberedness, $\tau$ and $\epsilon$ of satellite knots with
  $n$-twisted generalized {M}azur patterns},
        date={2024},
         url={https://arxiv.org/abs/2405.08763},
}

\bib{BS-instantonconcordance}{article}{
      author={Baldwin, John~A.},
      author={Sivek, Steven},
       title={Framed instanton homology and concordance},
        date={2021},
        ISSN={1753-8416,1753-8424},
     journal={J. Topol.},
      volume={14},
      number={4},
       pages={1113\ndash 1175},
         url={https://doi.org/10.1112/topo.12207},
      review={\MR{4332488}},
}

\bib{BS-Lspaceknottrace}{misc}{
      author={Baldwin, John~A.},
      author={Sivek, Steven},
       title={L-spaces and knot traces},
        date={2026},
         url={https://arxiv.org/abs/2501.00914},
}

\bib{FS95}{article}{
      author={Fintushel, Ronald},
      author={Stern, Ronald~J.},
       title={Immersed spheres in {$4$}-manifolds and the immersed {T}hom
  conjecture},
        date={1995},
        ISSN={1300-0098,1303-6149},
     journal={Turkish J. Math.},
      volume={19},
      number={2},
       pages={145\ndash 157},
      review={\MR{1349567}},
}

\bib{GHIGGINIfillable}{article}{
      author={Ghiggini, Paolo},
       title={Strongly fillable contact 3-manifolds without {S}tein fillings},
        date={2005},
        ISSN={1465-3060,1364-0380},
     journal={Geom. Topol.},
      volume={9},
       pages={1677\ndash 1687},
         url={https://doi.org/10.2140/gt.2005.9.1677},
      review={\MR{2175155}},
}

\bib{HANSELMANHF}{misc}{
      author={Hanselman, Jonathan},
       title={Heegaard floer homology and cosmetic surgeries in {$S^3$}},
        date={2020},
         url={https://arxiv.org/abs/1906.06773},
}

\bib{HKL}{article}{
      author={Hom, Jennifer},
      author={Karakurt, \c{C}a\u{g}r\i},
      author={Lidman, Tye},
       title={Surgery obstructions and {H}eegaard {F}loer homology},
        date={2016},
        ISSN={1465-3060,1364-0380},
     journal={Geom. Topol.},
      volume={20},
      number={4},
       pages={2219\ndash 2251},
         url={https://doi.org/10.2140/gt.2016.20.2219},
      review={\MR{3548466}},
}

\bib{HKMcontact}{article}{
      author={Honda, Ko},
      author={Kazez, William~H.},
      author={Mati\'c, Gordana},
       title={On the contact class in {H}eegaard {F}loer homology},
        date={2009},
        ISSN={0022-040X,1945-743X},
     journal={J. Differential Geom.},
      volume={83},
      number={2},
       pages={289\ndash 311},
         url={http://projecteuclid.org/euclid.jdg/1261495333},
      review={\MR{2577470}},
}

\bib{HomLidman19}{article}{
      author={Hom, Jennifer},
      author={Lidman, Tye},
       title={A note on positive-definite, symplectic four-manifolds},
        date={2019},
        ISSN={1435-9855,1435-9863},
     journal={J. Eur. Math. Soc. (JEMS)},
      volume={21},
      number={1},
       pages={257\ndash 270},
         url={https://doi.org/10.4171/JEMS/835},
      review={\MR{3880209}},
}

\bib{HMP}{article}{
      author={Hayden, Kyle},
      author={Mark, Thomas~E.},
      author={Piccirillo, Lisa},
       title={Exotic {M}azur manifolds and knot trace invariants},
        date={2021},
        ISSN={0001-8708,1090-2082},
     journal={Adv. Math.},
      volume={391},
       pages={Paper No. 107994, 30},
         url={https://doi.org/10.1016/j.aim.2021.107994},
      review={\MR{4317407}},
}

\bib{Hom-cables}{article}{
      author={Hom, Jennifer},
       title={Bordered {H}eegaard {F}loer homology and the tau-invariant of
  cable knots},
        date={2014},
        ISSN={1753-8416,1753-8424},
     journal={J. Topol.},
      volume={7},
      number={2},
       pages={287\ndash 326},
         url={https://doi.org/10.1112/jtopol/jtt030},
      review={\MR{3217622}},
}

\bib{Hom-conc}{article}{
      author={Hom, Jennifer},
       title={The knot {F}loer complex and the smooth concordance group},
        date={2014},
        ISSN={0010-2571,1420-8946},
     journal={Comment. Math. Helv.},
      volume={89},
      number={3},
       pages={537\ndash 570},
         url={https://doi.org/10.4171/CMH/326},
      review={\MR{3260841}},
}

\bib{HomWu}{article}{
      author={Hom, Jennifer},
      author={Wu, Zhongtao},
       title={Four-ball genus bounds and a refinement of the
  {O}zsv\'ath-{S}zab\'o{} tau invariant},
        date={2016},
        ISSN={1527-5256,1540-2347},
     journal={J. Symplectic Geom.},
      volume={14},
      number={1},
       pages={305\ndash 323},
         url={https://doi.org/10.4310/JSG.2016.v14.n1.a12},
      review={\MR{3523259}},
}

\bib{Livingston-tau}{article}{
      author={Livingston, Charles},
       title={Computations of the {O}zsv\'ath-{S}zab\'o{} knot concordance
  invariant},
        date={2004},
        ISSN={1465-3060,1364-0380},
     journal={Geom. Topol.},
      volume={8},
       pages={735\ndash 742},
         url={https://doi.org/10.2140/gt.2004.8.735},
      review={\MR{2057779}},
}

\bib{LM98}{incollection}{
      author={Lisca, P.},
      author={Mati\'c, G.},
       title={Stein {$4$}-manifolds with boundary and contact structures},
        date={1998},
      volume={88},
       pages={55\ndash 66},
         url={https://doi.org/10.1016/S0166-8641(97)00198-3},
        note={Symplectic, contact and low-dimensional topology (Athens, GA,
  1996)},
      review={\MR{1634563}},
}

\bib{MPV20}{article}{
      author={Mark, Thomas~E.},
      author={Piccirillo, Lisa},
      author={Vafaee, Faramarz},
       title={On the {S}tein framing number of a knot},
        date={2020},
        ISSN={1527-5256,1540-2347},
     journal={J. Symplectic Geom.},
      volume={18},
      number={1},
       pages={191\ndash 215},
         url={https://doi.org/10.4310/jsg.2020.v18.n1.a5},
      review={\MR{4088751}},
}

\bib{OO09}{article}{
      author={Ohta, Hiroshi},
      author={Ono, Kaoru},
       title={Simple singularities and topology of symplectically filling
  {$4$}-manifold},
        date={1999},
        ISSN={0010-2571,1420-8946},
     journal={Comment. Math. Helv.},
      volume={74},
      number={4},
       pages={575\ndash 590},
         url={https://doi.org/10.1007/s000140050106},
      review={\MR{1730658}},
}

\bib{OS00}{article}{
      author={Ozsv\'ath, Peter},
      author={Szab\'o, Zolt\'an},
       title={The symplectic {T}hom conjecture},
        date={2000},
        ISSN={0003-486X,1939-8980},
     journal={Ann. of Math. (2)},
      volume={151},
      number={1},
       pages={93\ndash 124},
         url={https://doi.org/10.2307/121113},
      review={\MR{1745017}},
}

\bib{OS-4ball}{article}{
      author={Ozsv\'ath, Peter},
      author={Szab\'o, Zolt\'an},
       title={Knot {F}loer homology and the four-ball genus},
        date={2003},
        ISSN={1465-3060,1364-0380},
     journal={Geom. Topol.},
      volume={7},
       pages={615\ndash 639},
         url={https://doi.org/10.2140/gt.2003.7.615},
      review={\MR{2026543}},
}

\bib{OS-knots}{article}{
      author={Ozsv\'ath, Peter},
      author={Szab\'o, Zolt\'an},
       title={Holomorphic disks and knot invariants},
        date={2004},
        ISSN={0001-8708,1090-2082},
     journal={Adv. Math.},
      volume={186},
      number={1},
       pages={58\ndash 116},
         url={https://doi.org/10.1016/j.aim.2003.05.001},
      review={\MR{2065507}},
}

\bib{OS-3mfds}{article}{
      author={Ozsv\'ath, Peter},
      author={Szab\'o, Zolt\'an},
       title={Holomorphic disks and topological invariants for closed
  three-manifolds},
        date={2004},
        ISSN={0003-486X,1939-8980},
     journal={Ann. of Math. (2)},
      volume={159},
      number={3},
       pages={1027\ndash 1158},
         url={https://doi.org/10.4007/annals.2004.159.1027},
      review={\MR{2113019}},
}

\bib{OScontact}{article}{
      author={Ozsv\'ath, Peter},
      author={Szab\'o, Zolt\'an},
       title={Heegaard {F}loer homology and contact structures},
        date={2005},
        ISSN={0012-7094,1547-7398},
     journal={Duke Math. J.},
      volume={129},
      number={1},
       pages={39\ndash 61},
         url={https://doi.org/10.1215/S0012-7094-04-12912-4},
      review={\MR{2153455}},
}

\bib{OS4manifold}{article}{
      author={Ozsv\'ath, Peter},
      author={Szab\'o, Zolt\'an},
       title={Holomorphic triangles and invariants for smooth four-manifolds},
        date={2006},
        ISSN={0001-8708,1090-2082},
     journal={Adv. Math.},
      volume={202},
      number={2},
       pages={326\ndash 400},
         url={https://doi.org/10.1016/j.aim.2005.03.014},
      review={\MR{2222356}},
}

\bib{OS-integer}{article}{
      author={Ozsv\'ath, Peter~S.},
      author={Szab\'o, Zolt\'an},
       title={Knot {F}loer homology and integer surgeries},
        date={2008},
        ISSN={1472-2747,1472-2739},
     journal={Algebr. Geom. Topol.},
      volume={8},
      number={1},
       pages={101\ndash 153},
         url={https://doi.org/10.2140/agt.2008.8.101},
      review={\MR{2377279}},
}

\bib{OS-rational}{article}{
      author={Ozsv\'ath, Peter~S.},
      author={Szab\'o, Zolt\'an},
       title={Knot {F}loer homology and rational surgeries},
        date={2011},
        ISSN={1472-2747,1472-2739},
     journal={Algebr. Geom. Topol.},
      volume={11},
      number={1},
       pages={1\ndash 68},
         url={https://doi.org/10.2140/agt.2011.11.1},
      review={\MR{2764036}},
}

\bib{Plamenevskaya04}{article}{
      author={Plamenevskaya, Olga},
       title={Bounds for the {T}hurston-{B}ennequin number from {F}loer
  homology},
        date={2004},
        ISSN={1472-2747,1472-2739},
     journal={Algebr. Geom. Topol.},
      volume={4},
       pages={399\ndash 406},
         url={https://doi.org/10.2140/agt.2004.4.399},
      review={\MR{2077671}},
}

\bib{Rasmussen-s}{article}{
      author={Rasmussen, Jacob},
       title={Khovanov homology and the slice genus},
        date={2010},
        ISSN={0020-9910,1432-1297},
     journal={Invent. Math.},
      volume={182},
      number={2},
       pages={419\ndash 447},
         url={https://doi.org/10.1007/s00222-010-0275-6},
      review={\MR{2729272}},
}

\bib{RenWillis}{misc}{
      author={Ren, Qiuyu},
      author={Willis, Michael},
       title={Khovanov homology and exotic $4$-manifolds},
        date={2025},
         url={https://arxiv.org/abs/2402.10452},
}

\bib{Yasui}{article}{
      author={Yasui, Kouichi},
       title={Corks, exotic 4-manifolds and knot concordance},
        date={2026},
        ISSN={0022-040X,1945-743X},
     journal={J. Differential Geom.},
      volume={132},
      number={3},
       pages={547\ndash 576},
         url={https://doi.org/10.4310/jdg/1770827024},
      review={\MR{5030469}},
}

\end{biblist}
\end{bibdiv}

\end{document}